\newcommand{\margnote}[1]{
\ifthenelse{\boolean{shownotes}}%
{\marginpar{\raggedright\tiny\texttt{#1}}}%
{}%
}
\newcommand{\hole}[1]{
\ifthenelse{\boolean{shownotes}}%
{\begin{center} \fbox{ \rule {.25cm}{0cm} \rule[-.1cm]{0cm}{.4cm}
\parbox{.85\textwidth}{\begin{center} \texttt{#1}\end{center}} \rule
{.25cm}{0cm}}\end{center}} {} }
\title{Singularity formation for a fluid mechanics model with nonlocal velocity}
\author[Changhui Tan]{Changhui Tan}
\address[Changhui Tan]{\newline Department of Mathematics, \ 
 University of South Carolina, 1523 Greene St., Columbia, SC 29208, USA}
\email{tan@math.sc.edu}
\subjclass[2010]{35Q35, 35Q92}
\keywords{porous medium flow, quasi-geostrophic equations,
  Euler-Alignment system, singularity}
\newtheorem{theorem}{Theorem}[section]
\newtheorem{lemma}[theorem]{Lemma}
\newtheorem{corollary}[theorem]{Corollary}
\newtheorem{proposition}[theorem]{Proposition}
\newtheorem{remark}{Remark}[section]
\newcommand{\R}{\mathbb R}
\newcommand{\T}{\mathbb T}
\newcommand{\pa}{\partial}
\def\u{\mathbf{u}}
\def\x{\mathbf{x}}
\def\y{\mathbf{y}}
\def\rhom{\bar{\rho}}
\def\eps{\varepsilon}
\begin{document}
\allowdisplaybreaks

\date{\today}


\begin{abstract} 
We study a 1D fluid mechanics model with nonlocal velocity. The
equation can be viewed as a fractional porous medium flow, a 1D model
of quasi-geostrophic equation, and also a special case of the
Euler-Alignment system. For strictly positive smooth initial data, global
regularity has been proved in \cite{do2018global}.
We construct a family of non-negative smooth initial data so that 
solution loses $C^1$ regularity. Our result indicates that strict
positivity is a critical condition to ensure global regularity of the system.
We also extend our construction to the corresponding models in
multi-dimensions. 
\end{abstract}

\maketitle \centerline{\date}

\section{Introduction}\label{sec:intro}
We are interested in the following 1D continuity equation
\begin{equation}\label{eq:cont}
\pa_t \rho + \pa_x (\rho u) = 0,
\end{equation}
with a nonlocal velocity field 
\begin{equation}\label{eq:velo}
u=H\Lambda^{\alpha-1}\rho,\quad 0<\alpha<2,
\end{equation}
where $H$ is the Hilbert transform, and $\Lambda^s=(-\Delta)^{s/2}$
denotes the nonlocal fractional
Laplacian operator. The initial density is set to be non-negative
\begin{equation}\label{eq:init}
\rho(x,t)|_{t=0}=\rho_0(x)\geq0.
\end{equation}

 The dynamics of $\rho$ in the system
\eqref{eq:cont}-\eqref{eq:init} can be alternatively written as
\begin{equation}\label{eq:main}
\pa_t\rho+u\pa_x\rho=-\rho\Lambda^\alpha\rho.
\end{equation}
It consists a nonlocal transport term $u\pa_x\rho$, and a
dissipation term $-\rho\Lambda^\alpha\rho$ which is nonlinear and
nonlocal. 

Without the dissipation term, the equation is an active scaler
\begin{equation}\label{eq:activescaler}
\pa_t\rho+u\pa_x\rho=0,
\end{equation}
with the velocity $u$ defined in \eqref{eq:velo}. It arises
as 1D simplified models for 2D surface quasi-geostrophic equations.
For $\alpha=1$, equation \eqref{eq:activescaler} was studied by
C\'ordoba, C\'ordoba and Fontelos \cite{cordoba2005formation}, where a
finite time loss of $C^1$ regularity is shown for some initial data.
Silvestre and Vicol \cite{silvestre2016transport} proved the similar
behavior for $\alpha\in(0,2)$. Both results indicate that the
transport term intends to drive the dynamics into singularity in
finite time.

With the dissipation term, the equation \eqref{eq:main} appears in
many models in fluid mechanics. 
Since the dissipation term has a possible regularizing effect, 
the understanding of the competition between the transport term and
the dissipation term attracts a lot of attentions in recent years.

\subsection*{Fractional porous medium flow} The main system
\eqref{eq:cont}-\eqref{eq:init} can be viewed as a porous medium
equation with fractional potential pressure, where $\rho$ represents
the density of the fluid. It was introduced by
Caffarelli and V\'azquez
\cite{caffarelli2011nonlinear}, where an existence theory for weak
solutions was established, for $\rho_0\in L^1$.
The regularizing effect was discussed in a series of successive works:
\cite{caffarelli2013regularity} for $\alpha\in(0,1)\cup(1,2)$, and
\cite{caffarelli2016regularity} for $\alpha=1$. Their result states
that weak solutions of the system with any $L^1$ initial data
instantly becomes H\"older continuous, and stays in $C^\gamma$ for all
time, with some $\gamma\in(0,1)$. Such regularizing effect is proved in
higher dimensions as well.

For $\alpha=1$, Carrillo, Ferreira and Precioso
\cite{carrillo2012mass} studied the system in the space of probability
measures with bounded second moment. They established a global
wellposedness theory by taking advantage of the gradient flow
structure of the system in 1D.

The system is also related to a model for the motion of the
dislocations in a solid proposed by Biler, Karch and Monneau in
\cite{biler2010nonlinear}.

\subsection*{1D model of quasi-geostrophic equation}
Chae, C\'ordoba, C\'ordoba and Fontelos \cite{chae2005finite} considered
\eqref{eq:cont}-\eqref{eq:init} with $\alpha=1$. They interpreted the
system as a 1D simplified model of 2D
quasi-geostrophic equation in atmospheric science,
where $\rho$ represents the temperature of the air subject to a shift
($\rho=\theta+\kappa$ in their notations).

They studied the system in the periodic domain $\T=[-1/2,1/2]$, and
focused on propagation of regularity with smooth initial data. The result
consists two parts. First, they showed that if $\rho_0>0$, then all $H^3$ initial data
stays in $H^3$ in all time. Second, they proved that the system loses
$C^1$ regularity in finite time, with the initial data chosen as
\begin{equation}\label{eq:initcccf}
\rho_0(x)=1-\cos(2\pi x),\quad x\in\T.
\end{equation}
The main difference between the two types of initial data
is that $\rho_0(x)=0$ is attained in the latter case.
It indicates that the preservation of $C^1$ regularity critically
depends on the strict positivity of the initial data.

In \cite{castro2008global}, Castro and C\'ordoba discussed the blowup
phenomenon for more general initial data without strict positivity.

It is worth noting that $u=H\rho$ when $\alpha=1$. Some properties and
identities of Hilbert transform were crucially used in their proof. So,
the extension of the result to general $\alpha\in(0,2)$ is far from trivial.

\subsection*{Euler-Alignment system}
System \eqref{eq:cont}-\eqref{eq:init} is also related to a
biologically motivated complex interacting system modeling collective
behaviors. The Cucker-Smale model \cite{cucker2007emergent} is an
agent-based model governed by Newton's second law
\begin{equation}\label{eq:ABM}
\dot{x}_i=v_i, \quad
m\dot{v}_i=F_i:=\frac{1}{N}\sum_{j=1}^N\psi(|x_i-x_j|)(v_j-v_i),
\end{equation}
where $(x_i, v_i)_{i=1}^N$ represent the position and velocity of agent
$i$. The force $F_i$ describes the alignment interaction on velocity, where the
influence function $\psi$
characterizes the strength of the velocity alignment between two
agents. Natually, it is a decreasing function of the distance between the agents.

The macroscopic representation of Cucker-Smale model \eqref{eq:ABM},
derived through a kinetic system (see \cite{ha2008particle}), is called
Euler-Alignment system. In 1D, it reads
\begin{align}
&\pa_t \rho + \pa_x (\rho u) = 0,\label{eq:EArho}\\
&\,\,\,\, \pa_t u + u\pa_x u =  \int_{\R} \psi(|x-y|)(u(y,t) - u(x,t))\rho(y,t)dy.\label{eq:EAu}
\end{align}

For the case when $\psi$ is Lipschitz, the system was studied in
\cite{tadmor2014critical,carrillo2016critical}. A critical threshold
phenomenon was discovered: preservation of $C^1$ regularity depends on
the choice of initial data. Subcritical initial data lead to global
regularity, while supercritical initial data lead to fintie time shock
formation.

Another case is when $\psi$ is singular, taking the form
\begin{equation}\label{eq:singularpsi}
\psi(|x|)=\frac{c_\alpha}{|x|^{1+\alpha}},\quad0<\alpha<2,
\end{equation}
with $c_\alpha$ be a positive constant such that
\[\Lambda^\alpha f=c_\alpha\int_\R \frac{f(x)-f(y)}{|x-y|^{1+\alpha}} dy.\]

One interesting feature of such choice of $\psi$ is that,  equation
\eqref{eq:EAu} becomes closely related to the 
Burgers equation with fractional dissipation
\begin{equation}\label{eq:fBurgers}
\pa_tu+u\pa_xu=-\Lambda^\alpha u,
\end{equation}
by enforcing $\rho\equiv1$. Kiselev, Nazarov and Shterenberg
\cite{kiselev2008blow} studied \eqref{eq:fBurgers}: when $0<\alpha<1$,
there exists initial data leading to finite time blow up; when
$\alpha\in[1,2)$, all smooth initial data lead to global regularity.

The Euler-Alignment system \eqref{eq:EArho}-\eqref{eq:EAu} with
singular influence function \eqref{eq:singularpsi} was studied in
\cite{do2018global} in the periodic domain. It was shown that all
smooth initial data $\rho_0>0$ leads to global regularity. In
particular, in the range of $\alpha\in(0,1)$, the behavior of the
solution is very different from the Burgers equation with fractional
dissipation, despite their similarity.
The global regularity result is extended to more general singular
influence function in \cite{kiselev2018global}.
Moreover, it is shown that the $C^1$ norm of the density $\rho$ is
uniformly bounded in all time.
For $\alpha\in[1,2)$, global regularity was independently shown by Shvydkoy
and Tadmor in \cite{shvydkoy2017eulerian} through a different
approach. Their result can also be extended for $\alpha\in(0,1)$ in
\cite{shvydkoy2017eulerian3}.

As discussed in \cite{do2018global}, a useful reformulation of the
Euler-Alignment system for $\rho$ and $G=\pa_xu-\Lambda^\alpha\rho$
has the form
\begin{equation}\label{eq:EA}
\pa_t\rho+\pa_x(\rho u)=0,\quad
\pa_tG+\pa_x(Gu)=0,\quad\pa_xu=\Lambda^\alpha\rho+G.
\end{equation}
In particular, if we pick the initial data such that
$G_0(x)=\pa_xu_0(x)-\Lambda^\alpha\rho_0(x)\equiv0$, then
$G\equiv0$ for all $t>0$, and the dynamics of $\rho$ becomes our main
system \eqref{eq:cont}-\eqref{eq:velo}.

Therefore, the result in \cite{do2018global} implies that for
$\alpha\in(0,2)$, system \eqref{eq:cont}-\eqref{eq:velo} with smooth initial
data $\rho_0>0$ stays smooth in all time. It serves as an extension to the first
part of the result in \cite{chae2005finite} with general $\alpha$.

\subsection*{The main result}
In this paper, we focus on \eqref{eq:cont}-\eqref{eq:velo} with non-negative
initial data $\rho_0$ which is not strictly positive. We construct 
initial data which lead to singularity formations.

\begin{theorem}\label{thm:main}
Consider the system \eqref{eq:cont}-\eqref{eq:init} in the periodic
domain $\T$. There exists a family of smooth initial data $\rho_0$
such that the solution $\rho(\cdot,t)$ is not bounded in $C^1$
uniformly in $t$.
\end{theorem}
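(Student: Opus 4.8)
The plan is to exploit the structural identity $\pa_x u=\Lambda^\alpha\rho$ (immediate from $u=H\Lambda^{\alpha-1}\rho$ and $H\pa_x=\Lambda$) together with the fact that the dissipation $-\rho\Lambda^\alpha\rho$ in \eqref{eq:main} degenerates precisely where $\rho$ vanishes. First I would take the initial data even, smooth, non-negative, and vanishing to exactly second order at the origin --- concretely the one-parameter family $\rho_0^{(a)}(x)=a\bigl(1-\cos(2\pi x)\bigr)$, $a>0$, i.e. the rescalings of \eqref{eq:initcccf}, since matching that data is the point of the theorem. Evenness is preserved by the flow, so $u(\cdot,t)$ is odd, the point $x=0$ is a stagnation point of the Lagrangian flow $\pa_t X=u(X,t)$, and $\pa_x\rho(0,t)=0$; moreover, since $\tfrac{d}{dt}\rho(0,t)=-\rho(0,t)\Lambda^\alpha\rho(0,t)$ and $\Lambda^\alpha\rho(0,t)$ is bounded on compact time intervals, the zero at the origin persists: $\rho(0,t)=0$ for all $t$ in the lifespan of the smooth solution.

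The driving mechanism is a uniform compression of the flow at the origin. Because $\rho(0,t)=0$, $\rho\ge 0$, and the mass $m_0=\int_{\T}\rho_0>0$ is conserved, for all $t$
\[
\pa_x u(0,t)=\Lambda^\alpha\rho(0,t)=-c_\alpha\int_{\T}\frac{\rho(y,t)}{d(0,y)^{1+\alpha}}\,dy\le-c_\alpha 2^{1+\alpha}m_0=:-K_*<0,
\]
using $d(0,y)\le\tfrac12$. As a first, quantitative consequence, differentiating \eqref{eq:main} twice in $x$ and evaluating at $x=0$, where $\rho$, $\pa_x\rho$ and $u$ all vanish, produces the closed ODE $\dot w=-3\,\pa_x u(0,t)\,w\ge 3K_*w$ for $w(t):=\pa_x^2\rho(0,t)$; since $w(0)=\rho_0''(0)>0$ this gives $w(t)\ge\rho_0''(0)e^{3K_*t}\to\infty$, so the profile pinches at the origin.

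To convert the pinch into a genuine failure of $C^1$ boundedness I would argue by contradiction. Suppose $\sup_t\|\rho(\cdot,t)\|_{C^1}=M<\infty$; then by the standard continuation criterion the solution is global and smooth with this bound for all $t$. The $C^1$ bound yields a time-uniform modulus of continuity for $\Lambda^\alpha\rho(\cdot,t)$ near $x=0$, so $\pa_x u(x,t)=\Lambda^\alpha\rho(x,t)\le-\tfrac12 K_*$ for $|x|\le\delta_0$ with $\delta_0$ independent of $t$; hence the flow map satisfies $0<X(t;x)\le x\,e^{-K_*t/2}$ for $0<x\le\delta_0$. Pushing mass conservation forward along the flow, $\int_{-X(t;r)}^{X(t;r)}\rho(z,t)\,dz=\int_{-r}^{r}\rho_0(x)\,dx=:\mu(r)>0$ for each fixed small $r$, so a fixed amount of mass is trapped in an interval of length $2X(t;r)\to 0$, forcing $\|\rho(\cdot,t)\|_{L^\infty}\ge\mu(r)/\bigl(2X(t;r)\bigr)\to\infty$. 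This contradicts $\|\rho(\cdot,t)\|_{L^\infty}\le M$; indeed it contradicts the elementary maximum principle $\|\rho(\cdot,t)\|_{L^\infty}\le\|\rho_0\|_{L^\infty}$ (which also shows the blow-up must be carried by $\pa_x\rho$). Hence $\rho(\cdot,t)$ is not bounded in $C^1$ uniformly in $t$, and the same scheme, applied along a line or a radial profile through a zero of $\rho_0$, covers the multi-dimensional variants.

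I expect the main obstacle to be the time-uniform control of $\Lambda^\alpha\rho$ near the origin underlying the uniform compression rate: a bare $C^1$ bound controls $\Lambda^\alpha\rho$ comfortably only for $\alpha\in(0,1)$, so for $\alpha\in[1,2)$ one must either run the contradiction from a slightly stronger a priori norm (still far below $H^3$, and still controlled by a $C^1$ bound together with the smoothing of \eqref{eq:main} away from the zero set) or bootstrap using that the profile is pinned to zero at the origin. The remaining ingredients --- persistence of the zero, the precise continuation criterion, and the $L^\infty$ maximum principle --- are routine but must be set up carefully, since it is exactly the tension between ``conserved mass collapsing to a point'' and ``non-increasing sup-norm'' that delivers the contradiction.
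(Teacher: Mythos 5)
Your overall scheme --- a stagnation point at the origin, a time-uniform compression of the flow there, and then ``conserved mass trapped in a shrinking interval'' against the $L^\infty$ maximum principle --- is sound, and your observation $\pa_x u(0,t)=\Lambda^\alpha\rho(0,t)\le -c_\alpha 2^{1+\alpha}m$ (using persistence of the zero $\rho(0,t)=0$) is correct. The genuine gap is the step that extends this single-point information to a fixed neighborhood: the claim that a uniform $C^1$ bound on $\rho$ ``yields a time-uniform modulus of continuity for $\Lambda^\alpha\rho$'' is true only for $\alpha\in(0,1)$, where Lipschitz functions are mapped into $C^{1-\alpha}$. For $\alpha\in[1,2)$ a uniform $C^1$ bound gives no pointwise control of $\Lambda^\alpha\rho$ at all (it need not even be bounded), so you cannot conclude $\pa_x u(x,t)\le-\tfrac12 K_*$ on $|x|\le\delta_0$ with $\delta_0$ independent of $t$, and the exponential contraction $X(t;r)\le re^{-K_*t/2}$ --- the engine of your contradiction --- is unavailable precisely in the range $\alpha\in[1,2)$ that the theorem also covers. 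You flag this yourself, but the proposed repairs do not close it: running the contradiction from a ``slightly stronger a priori norm'' changes the statement being negated (you would only prove loss of that stronger norm, e.g. of $C^{1,\beta}$ with $\beta>\alpha-1$, not of $C^1$), and the bootstrap via the pinned zero is not carried out. As written, the proposal proves Theorem~\ref{thm:main} only for $\alpha\in(0,1)$; the closed ODE $\dot w=-3\pa_xu(0,t)w$ for $w=\pa_x^2\rho(0,t)$ is a nice remark but only gives loss of uniform $C^2$ bounds, which does not help with $C^1$.

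The paper avoids this obstruction by never differentiating $u$ and never invoking any regularity of $\Lambda^\alpha\rho$. It imposes and propagates the symmetry and monotonicity hypotheses \eqref{eq:h1}--\eqref{eq:h2} (your data $a(1-\cos 2\pi x)$ satisfies them), and then estimates the velocity itself pointwise: decomposing the singular integral for $u(x,t)$ and applying a rearrangement-type comparison (Lemma~\ref{lem:min}) together with conservation of mass and the maximum principle, it obtains $u(x,t)\le -Ax$ with $A=\alpha m/(4\rhom)$ whenever $x\in[0,\delta]$ and $\rho(x,t)\le m/2$ (Theorem~\ref{thm:improveu}); the smallness of $\rho$ near the origin is exactly what the contradiction hypothesis (uniform $C^1$ plus the zero at $x=0$) provides, with no constraint on $\alpha$. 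The endgame --- contraction of characteristics versus conservation of mass along them, plus a second case when $\rho_0$ vanishes on a whole interval --- is essentially the one you use. If you want to rescue your route for $\alpha\ge 1$, you need a bound on $u$ near the origin that does not pass through $\pa_x u=\Lambda^\alpha\rho$; the paper's monotonicity-plus-rearrangement estimate is precisely such a device. (A smaller shared caveat: both you and the paper implicitly use that a uniform $C^1$ bound allows the smooth solution to be continued, so asserting the ``standard continuation criterion'' is acceptable here, though it deserves a precise reference.)
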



Theorem \ref{thm:main} says that the solution will lose $C^1$
regularity as time approaches infinity. Note that this type of singularity
does not happen when $\rho_0>0$ (see \cite{kiselev2018global}).
Hence, the non-vacuum assumption is critical to ensure global
regularity.

Theorem \ref{thm:main} extends the blow up result in
\cite{chae2005finite} to the general case $\alpha\in(0,2)$.
However, it only guarantees singularity formations as time approaches
infinity. Whether the blowup happens in finite time is still an open
problem, which requires future investigations.

As a direct consequence, we have the following result for
Euler-Alignment system.
\begin{corollary}\label{cor:EA}
Consider the initial value problem of Euler-Alignment system
\eqref{eq:EArho}-\eqref{eq:EAu} with singular influence function
$\psi$ defined in \eqref{eq:singularpsi}. There exists smooth
initial data $\rho_0\geq0$ and $u_0$ such that the solution lose
uniform $C^1$ regularity.
\end{corollary}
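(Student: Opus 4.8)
The corollary follows from Theorem \ref{thm:main} once one observes, exactly as in the discussion surrounding \eqref{eq:EA}, that the scalar model \eqref{eq:cont}--\eqref{eq:velo} sits inside the Euler-Alignment system as the invariant subclass on which $G=\pa_xu-\Lambda^\alpha\rho$ vanishes identically. So the plan is simply to build the initial data that realizes this reduction and then import the blow-up.

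First I would take $\rho_0$ to be any member of the family of smooth, non-negative initial data furnished by Theorem \ref{thm:main}, and set $u_0=H\Lambda^{\alpha-1}\rho_0$, so that by \eqref{eq:velo} one has $G_0:=\pa_xu_0-\Lambda^\alpha\rho_0\equiv0$. One should check that $u_0$ is smooth: since $\rho_0\in C^\infty(\T)$ its Fourier coefficients decay faster than any power, and the Fourier multiplier $-i\,\mathrm{sgn}(k)|k|^{\alpha-1}$ of $H\Lambda^{\alpha-1}$ vanishes at $k=0$ and is polynomially bounded in $k$ for $k\neq0$; hence $u_0\in C^\infty(\T)$ as well. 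Thus $(\rho_0,u_0)$ is an admissible pair of smooth data with $\rho_0\geq0$.

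Next, using the reformulation \eqref{eq:EA}, I would note that $G$ obeys the continuity equation $\pa_tG+\pa_x(Gu)=0$; hence $G\equiv0$ is a solution with the prescribed initial value, and by uniqueness in the solution class at hand it is the solution. Therefore $\pa_xu=\Lambda^\alpha\rho$ for all $t$, which forces $u=H\Lambda^{\alpha-1}\rho+c(t)$ for some function $c$ of time alone. Writing $X(t)=\int_0^tc(s)\,ds$ and setting $\tilde\rho(x,t)=\rho(x+X(t),t)$, one checks—using that $H\Lambda^{\alpha-1}$ commutes with translations—that $\tilde\rho$ solves exactly \eqref{eq:cont}--\eqref{eq:velo} with datum $\rho_0$, while $\|\rho(\cdot,t)\|_{C^1}=\|\tilde\rho(\cdot,t)\|_{C^1}$ for every $t$. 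Theorem \ref{thm:main} then gives that $\tilde\rho(\cdot,t)$, and hence $\rho(\cdot,t)$, is not bounded in $C^1$ uniformly in $t$; since $\rho$ is one component of the Euler-Alignment solution $(\rho,u)$, the solution of \eqref{eq:EArho}--\eqref{eq:EAu} loses uniform $C^1$ regularity, which is the assertion.

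The argument is essentially bookkeeping, and the only genuinely delicate point is the propagation $G_0\equiv0\Rightarrow G\equiv0$: it requires a well-posedness/uniqueness statement for \eqref{eq:EArho}--\eqref{eq:EAu} (equivalently for \eqref{eq:EA}) valid in a function class that contains the—possibly vacuum-touching—solutions supplied by Theorem \ref{thm:main}. This is precisely where the absence of strict positivity of $\rho_0$ must be accommodated, and I expect it to be the main (indeed the only) obstacle; the handling of the time-dependent spatial mean $c(t)$ via the rigid translation $X(t)$ above is the only other item needing care, and it affects no $C^1$ norm.
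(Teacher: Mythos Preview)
Your approach is the same as the paper's: choose $\rho_0$ from Theorem~\ref{thm:main} and set $u_0=H\Lambda^{\alpha-1}\rho_0$, so that $G_0\equiv0$ and the Euler--Alignment dynamics collapse to \eqref{eq:cont}--\eqref{eq:velo}. The paper states this in a single sentence after the corollary, whereas you have (correctly) spelled out the smoothness of $u_0$, the propagation $G\equiv0$, the handling of a possible mean $c(t)$ via translation, and flagged the well-posedness caveat in the presence of vacuum.
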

The choice of initial data could be $\rho_0$ from Theorem
\ref{thm:main}, and $u_0=H\Lambda^{\alpha-1}\rho_0$.

\quad

The rest of the paper is organized as follows. In section
\ref{sec:est}, we show apriori bounds for the system with some
proposed symmetry. In section \ref{sec:enhance}, we obtain an enhanced
estimate on the velocity $u$, which plays an essential rule in
proving the singularity formation.  Theorem \ref{thm:main} is then
proved in section \ref{sec:blowup}.
In section \ref{sec:multi}, we extend the result to systems in multi-dimensional spaces.
Finally, in section
\ref{sec:discuss}, we make some remarks on related topics for further
investigation.





\section{Apriori estimates}\label{sec:est}
In this section, we derive some useful estimates for our main system
\eqref{eq:cont}-\eqref{eq:init}, which will help us to
construct initial data and obtain finite time blow up.

We first propose the following even symmetry condition to $\rho_0$
\begin{equation}\label{eq:h1}
\tag{H1} \rho_0(x)=\rho_0(-x).
\end{equation}

Since we consider periodic data, $\rho_0$ can be determined by
its value in $x\in[0,1/2]$.
We also note that periodicity and even symmetry preserves in time.

\subsection{Maximum principle}
Let us assume the initial data is bounded, satisfying
\begin{equation}\label{eq:h3}
\tag{H2}0\leq\rho_0(x)\leq\rhom,\quad\forall~x\in\T.
\end{equation}
Then, $\rho(\cdot,t)$ satisfies \eqref{eq:h3} for all $t\geq0$, due to
maximum principle.
\begin{proposition}[Maximum principle]\label{prop:max}
Let $\rho$ be a smooth solution of \eqref{eq:cont} with initial data
$\rho_0$ satisfying \eqref{eq:h3}. Then, $\rho(\cdot,t)$ satisfies
\eqref{eq:h3} for all $t\geq0$.
\end{proposition}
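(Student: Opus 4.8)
The statement is a maximum principle for the transport-type equation $\pa_t\rho + u\pa_x\rho = -\rho\Lambda^\alpha\rho$, and the key point is that the dissipation term $-\rho\Lambda^\alpha\rho$ cooperates with both bounds. The plan is to work with the reformulation \eqref{eq:main} along characteristics. Define the flow map $X(t;x)$ by $\dot X = u(X,t)$, $X(0;x)=x$, and let $f(t) = \rho(X(t;x),t)$. Then $\dot f = -f\,\Lambda^\alpha\rho(X(t;x),t)$.

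For the lower bound $\rho\geq 0$: at a point where $\rho$ attains a spatial minimum, if that minimum value is $\leq 0$, then $\Lambda^\alpha\rho$ evaluated there is $\leq 0$ (since $\Lambda^\alpha$ applied at a minimum point of a function is nonpositive — this is the standard pointwise inequality $\Lambda^\alpha\rho(x_*) = c_\alpha\int \frac{\rho(x_*)-\rho(y)}{|x_*-y|^{1+\alpha}}\,dy \leq 0$ when $\rho(x_*)\leq\rho(y)$ for all $y$). Hence at such a point $-\rho\,\Lambda^\alpha\rho = -\rho(x_*)\Lambda^\alpha\rho(x_*)$; writing $m(t)=\min_x\rho(\cdot,t)$, a Rademacher/envelope argument gives $\frac{d}{dt}m(t) \geq -m(t)\,\Lambda^\alpha\rho(x_*(t),t)$ at points where $m(t)\leq 0$, and since $\Lambda^\alpha\rho(x_*(t),t)\leq 0$ there, the right-hand side is $\geq m(t)\cdot(\text{nonnegative})\cdot(-1)\cdots$ — more cleanly, $m(t)$ satisfies a differential inequality of the form $m'(t)\geq C(t)\,m(t)$ with $C(t)=-\Lambda^\alpha\rho(x_*(t),t)\geq 0$ bounded (for a smooth solution), so by Grönwall $m(0)\geq 0$ forces $m(t)\geq 0$ for all $t\geq 0$. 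For the upper bound $\rho\leq\rhom$: symmetrically, at a spatial maximum point $x^*$ one has $\Lambda^\alpha\rho(x^*,t)\geq 0$, so for $M(t)=\max_x\rho(\cdot,t)$ we get $M'(t)\leq -M(t)\,\Lambda^\alpha\rho(x^*(t),t)$; once $\rho\geq 0$ is established (so $M(t)\geq 0$) the right side is $\leq 0$, hence $M(t)\leq M(0)\leq\rhom$.

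An alternative, perhaps cleaner, route is to note that \eqref{eq:cont} is a continuity equation $\pa_t\rho+\pa_x(\rho u)=0$, which immediately gives conservation of mass $\int_\T\rho\,dx = \int_\T\rho_0\,dx$; combined with the characteristic computation $\frac{d}{dt}\rho(X(t;x),t) = -\rho\,\Lambda^\alpha\rho - \rho\,\pa_x u$ rewritten using $\pa_x u = \Lambda^\alpha\rho$ (from \eqref{eq:velo}, since $\pa_x H\Lambda^{\alpha-1} = \Lambda^\alpha$)... but one must be careful with signs; the direct minimum/maximum-principle argument above is the safest. I would present the lower and upper bounds in that order, using the pointwise nonlocal inequality for $\Lambda^\alpha$ at extremal points as the central tool.

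The main obstacle is the technical justification of differentiating $m(t)$ and $M(t)$ — the extremum of a time-dependent smooth function need not be differentiable in $t$, and the extremizing point $x_*(t)$ may jump. The standard fix is to use that $t\mapsto m(t)$ is Lipschitz (since $\rho$ is smooth and $\T$ compact), hence differentiable a.e., and at a.e. $t$ its derivative equals $\pa_t\rho(x_*(t),t)$ for any extremizing $x_*(t)$; one then applies the differential inequality in the integrated (Grönwall) form, which only needs the a.e. bound. Since the proposition explicitly assumes $\rho$ is a \emph{smooth} solution, this regularity is available and no approximation argument is needed.
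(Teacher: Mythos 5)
Your proposal is correct, and its core mechanism is the same as the paper's: at a spatial extremum of $\rho(\cdot,t)$ the first derivative vanishes and $\Lambda^\alpha\rho$ has a definite sign (nonnegative at a maximum, nonpositive at a minimum), so equation \eqref{eq:main} forces $\pa_t\rho$ to have the favorable sign there. The packaging differs: the paper runs a contradiction argument at a hypothetical first violation point $(x_0,t_0)$, asserting $\pa_t\rho(x_0,t_0)>0$ there and deriving $\pa_t\rho(x_0,t_0)\le 0$ from the equation; you instead track $m(t)=\min_x\rho$ and $M(t)=\max_x\rho$, use their Lipschitz continuity and a.e. differentiability, and close with Gr\"onwall via the differential inequalities $m'\ge -m\,\Lambda^\alpha\rho(x_*,t)$ and $M'\le -M\,\Lambda^\alpha\rho(x^*,t)$. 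Your route is slightly more careful on the standard technical point that a "first touching time with strictly positive time derivative" need not literally exist, which the paper's contradiction setup glosses over; the paper's version is shorter and avoids the envelope/Rademacher machinery. One small ordering remark: you prove $\rho\ge 0$ first and then use $M(t)\ge 0$ for the upper bound, whereas the paper's upper-bound argument only needs $\rho(x_0,t_0)=\rhom\ge 0$ at the contact point, so the two bounds are independent there; either order is fine. Your half-sketched "alternative route" via mass conservation is not needed and, as you note yourself, should be dropped in favor of the extremum argument.
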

\begin{proof}
Suppose $\rho(x,t)\leq\rhom$ does not hold for all $(x,t)$. Then, there
exists $x_0$ and $t_0$ such that 
\[\rho(x_0,t_0)=\rhom,\quad
  \rho(x,t_0)\leq\rhom,~~\forall~x\in\T,\quad\text{and}\quad
\pa_t\rho(x_0,t_0)>0.\]
So the violation first occurs at $x_0$ at time $t_0+$.

Since $\rho(\cdot,t_0)$ attains its maximum at $x_0$, we know
\[\pa_x\rho(x_0,t_0)=0,\quad\text{and}\quad
  \Lambda^\alpha\rho(x_0,t_0)\geq0.\]
Therefore, from \eqref{eq:main} we obtain
\[\pa_t\rho(x_0,t_0)=-u(x_0,t_0)\pa_x\rho(x_0,t_0)-\rho(x_0,t_0)\Lambda^\alpha\rho(x_0,t_0)\leq0.\]
This leads to a contradiction. Therefore, $\rho(x,t)\leq\rhom$ holds for all
$x\in\T$ and $t\geq0$.

Positivity preserving property $\rho(x,t)\geq0$ can be proved similarly.
\end{proof}

\subsection{Conservation of mass}
We denote $m$ as the initial mass
\begin{equation}\label{eq:mass}
  m = \int_\T\rho_0(x)dx.
\end{equation}

Integrating the continuity equation \eqref{eq:cont} in $x$, we get
\[\frac{d}{dt}\int_\T\rho(x,t)dx=-\int_\T\pa_x(\rho(x,t)u(x,t))=0.\]
This implies the conservation of total mass.

Moreover, the mass in any interval is conserved along the
characteristic flow.
\begin{proposition}[Conservation of mass]\label{lem:masscons}
  Let $\rho$ be a strong solution of the continuity equation \eqref{eq:cont}.
  Let $X_1(t), X_2(t)$ be two characteristic paths starting at
  $x_1$ and $x_2$, respectively.
  \[\frac{d}{dt}X_i(t)=u(X_i(t),t),\quad X_i(0)=x_i,\quad i=1,2.\]
  Then,  the mass in the interval $[X_1(t), X_2(t)]$ is conserved
  in time, namely
  \begin{equation}\label{eq:masscons}
    \int_{X(t;x_1)}^{X(t;x_2)}\rho(x,t)dx=\int_{x_1}^{x_2}\rho_0(x)dx,\quad\forall~t\geq0.
  \end{equation}
\end{proposition}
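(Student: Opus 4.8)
The plan is to differentiate the mass functional on the moving interval and show the derivative vanishes. Define
\[
I(t):=\int_{X_1(t)}^{X_2(t)}\rho(x,t)\,dx.
\]
Since $\rho$ is a strong solution and the characteristics $X_1,X_2$ are well-defined $C^1$ curves (the velocity $u=H\Lambda^{\alpha-1}\rho$ is Lipschitz in $x$ whenever $\rho$ is smooth enough), the integrand and the endpoints are differentiable in $t$, so the Leibniz rule applies:
\[
I'(t)=\rho(X_2(t),t)\,\dot X_2(t)-\rho(X_1(t),t)\,\dot X_1(t)+\int_{X_1(t)}^{X_2(t)}\pa_t\rho(x,t)\,dx.
\]

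Next I would substitute the two defining relations. From the characteristic ODE, $\dot X_i(t)=u(X_i(t),t)$, so the boundary terms become $\rho u$ evaluated at $X_2$ and $X_1$. For the interior integral, I use the continuity equation \eqref{eq:cont} in the form $\pa_t\rho=-\pa_x(\rho u)$ and the fundamental theorem of calculus:
\[
\int_{X_1(t)}^{X_2(t)}\pa_t\rho(x,t)\,dx=-\int_{X_1(t)}^{X_2(t)}\pa_x(\rho u)(x,t)\,dx=(\rho u)(X_1(t),t)-(\rho u)(X_2(t),t).
\]
Adding the three contributions, the boundary terms cancel in pairs and $I'(t)=0$ for all $t\ge 0$.

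Finally, integrating in time gives $I(t)=I(0)$, which is precisely \eqref{eq:masscons} (with $X_i(0)=x_i$). There is no serious obstacle here; the only point deserving a remark is that the computation requires enough regularity for the Leibniz rule and the fundamental theorem of calculus to be legitimate, which is guaranteed by the hypothesis that $\rho$ is a strong (smooth) solution, and that the characteristic flow is globally defined on the periodic domain. The conservation of total mass stated just before the proposition is then the special case $X_1(t)=-1/2$, $X_2(t)=1/2$, using periodicity.
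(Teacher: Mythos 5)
Your proof is correct, and it is the standard argument: the paper itself does not write out a proof of this proposition but simply cites \cite[Lemma 5.1]{tan2019euler}, and the Leibniz-rule computation you give (endpoint terms from $\dot X_i=u(X_i,t)$ cancelling against $\int\pa_x(\rho u)$ via the continuity equation) is exactly the argument behind that reference. The regularity caveat you mention is appropriately handled by the strong-solution hypothesis, so nothing further is needed.
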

The proof can be found, for instance, in \cite[Lemma
5.1]{tan2019euler}. 

\subsection{Preservation of monotonicity}
We make another assumption on $\rho_0$.
\begin{equation}\label{eq:h2}
\tag{H3} \rho_0(0)=0,\quad \pa_x\rho_0(x)\geq0,~\forall~x\in[0,1/2],
\end{equation}
namely $\rho_0$ is increasing in $[0,1/2]$.

\begin{figure}[h]
\begin{tikzpicture}[scale=1.5]
    \path [draw=black, very thick]
    (0,0) .. controls (.3,.05) .. (.5,1) .. controls (.7, 1.95)
    .. (1.5,2) -- (2.5,2) .. controls (3.3,1.95) .. (3.5,1)
    .. controls (3.7, .05) .. (4,0);

    \coordinate (y) at (0,2.3);
    \coordinate (x) at (4.2,0);
    \draw[<->] (y) node[above] {$\rho_0(x)$} -- (0,0) --  (x) node[right] {$x$};

    \draw (2,0.1) -- (2,0) node[below] {$1/2$};
    \draw (0.1,2) -- (0,2) node[left] {$\rhom$};
    \draw (4,0.1) -- (4,0) node[below] {$1$};

    \draw[dashed] (2,0) -- (2,2.3) node[above] {even};
    \draw[<->] (1.8,2.2) -- (2.2,2.2);
    \draw[dashed] (0,2) -- (1,2);
  \end{tikzpicture}
\caption{The choice of initial data $\rho_0$, satisfying
  \eqref{eq:h1}-\eqref{eq:h2}}\label{fig:init}
\end{figure}
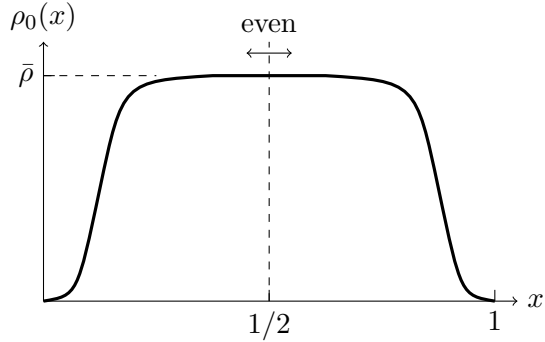

The following proposition shows that such monotonicity is preserved in
time.

\begin{proposition}[Monotonicity]
Assume that $\rho_0$ is smooth and satisfies \eqref{eq:h1}-\eqref{eq:h2}.
Let $\rho$ be a classical solution of
\eqref{eq:cont}-\eqref{eq:init}. Then, $\rho(\cdot,t)$ satisfies
\eqref{eq:h2} for any $t\geq0$.
\end{proposition}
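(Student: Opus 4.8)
The plan is to exploit the even symmetry to pin down the vacuum point $x=0$, to derive a closed equation for $v:=\pa_x\rho$, and then to run a nonlocal maximum principle on the half-period $[0,1/2]$. \emph{Fixing the vacuum.} Since periodicity and \eqref{eq:h1} are preserved in time, $\rho(\cdot,t)$ is even about $x=0$ and, by periodicity, also about $x=1/2$; hence $\pa_x\rho(0,t)=\pa_x\rho(1/2,t)=0$ for all $t$. Evaluating \eqref{eq:main} at $x=0$ then gives the linear ODE $\frac{d}{dt}\rho(0,t)=-\rho(0,t)\Lambda^\alpha\rho(0,t)$ with bounded coefficient (the solution being classical) and initial value $\rho_0(0)=0$, so $\rho(0,t)\equiv0$, which is the first part of \eqref{eq:h2}. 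Together with $\rho\ge0$ (Proposition~\ref{prop:max}), the characteristic ODE $\frac{d}{dt}\rho(X(t),t)=-\rho\,\Lambda^\alpha\rho$ shows $\rho(\cdot,t)>0$ on $(0,1/2]$ as long as $\rho_0>0$ there; if instead $\rho_0$ vanishes on a maximal interval $[0,b]$, that flat vacuum region is transported as a whole, and the argument below is run on its (moving) complement inside $[0,1/2]$.

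\emph{Equation for $v$.} A Fourier computation (equivalently, the identity $G\equiv0$ in the reformulation \eqref{eq:EA}) yields $\pa_x u=\Lambda^\alpha\rho$. Differentiating \eqref{eq:main} in $x$ and substituting gives
\[
\pa_t v+u\,\pa_x v=-2(\Lambda^\alpha\rho)\,v-\rho\,\Lambda^\alpha v,
\]
where $v$ is odd about both $0$ and $1/2$, so $v(0,t)=v(1/2,t)=0$, and $v(\cdot,0)\ge0$ on $[0,1/2]$ by \eqref{eq:h2}. The goal is to show this sign persists.

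\emph{Nonlocal maximum principle.} Suppose not, and let $t_\ast$ be the first time $v(\cdot,t)$ fails to be $\ge0$ on $[0,1/2]$. Since $v$ vanishes at the endpoints, there is an interior minimum $x_\ast\in(0,1/2)$ with $v(x_\ast,t_\ast)=0$ and $\pa_x v(x_\ast,t_\ast)=0$, so the terms $u\,\pa_x v$ and $2(\Lambda^\alpha\rho)v$ vanish at $(x_\ast,t_\ast)$. The crucial step is that $\Lambda^\alpha v(x_\ast,t_\ast)<0$: writing $\Lambda^\alpha v(x_\ast)=c_\alpha\int_{\T}\bigl(v(x_\ast)-v(y)\bigr)K(x_\ast-y)\,dy$ with $K$ the even, positive, periodized kernel (strictly decreasing in periodic distance on $[0,1/2]$) and folding the $[-1/2,0]$ part onto $[0,1/2]$ via $v(-y)=-v(y)$,
\[
\Lambda^\alpha v(x_\ast,t_\ast)=c_\alpha\int_0^{1/2}v(y,t_\ast)\bigl[K(x_\ast+y)-K(x_\ast-y)\bigr]\,dy;
\]
the integral converges absolutely because $v(x_\ast,t_\ast)=\pa_x v(x_\ast,t_\ast)=0$ cancels the kernel singularity (using $\alpha<2$), the bracket is $\le0$ since $\min\{x_\ast+y,\,1-x_\ast-y\}\ge|x_\ast-y|$ for $x_\ast,y\in(0,1/2)$ (strictly for a.e.\ $y$), $v(\cdot,t_\ast)\ge0$ on $[0,1/2]$, and $v(\cdot,t_\ast)\not\equiv0$ there (its integral equals $\rho(1/2,t_\ast)>0$ when the mass is positive), so the expression is strictly negative. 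Since $\rho(x_\ast,t_\ast)>0$ from the first step, the equation gives $\pa_t v(x_\ast,t_\ast)=-\rho(x_\ast,t_\ast)\Lambda^\alpha v(x_\ast,t_\ast)>0$, forcing $v(x_\ast,t)<0$ for $t$ slightly below $t_\ast$, contradicting minimality of $t_\ast$.

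The hard part will be the nonlocal maximum principle, and within it the sign of $\Lambda^\alpha v(x_\ast,t_\ast)$ at the touching point: this is exactly where the even symmetry \eqref{eq:h1} (about both $0$ and $1/2$) and the precise periodized form of the fractional-Laplacian kernel must be used carefully. The usual weak-versus-strict subtlety of such maximum principles is dissolved here by the \emph{strict} negativity of $\Lambda^\alpha v$ at the touching point — so that $\pa_t v>0$ strictly — once the degenerate alternative $\rho(x_\ast,t_\ast)=0$ is ruled out as indicated above.
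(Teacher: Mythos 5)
Your core computation coincides with the paper's: differentiate \eqref{eq:main}, use $\pa_x u=\Lambda^\alpha\rho$ and the oddness of $v=\pa_x\rho$, fold the periodized kernel onto $[0,1/2]$, and use a sign property of the kernel difference. Note, though, that your parenthetical claim that the periodized kernel is (strictly) decreasing in the periodic distance is exactly the content of the paper's Lemma \ref{lem:pos}; you assert it rather than prove it, and it is the one kernel fact that genuinely requires an argument (it is true, and provable by pairing terms as in that lemma).

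The genuine gap is in how you close the contradiction. The paper stipulates a first breakdown point with $\pa_t\zeta(x_0,t_0)<0$ and then needs only the \emph{weak} signs $\rho(x_0,t_0)\geq 0$ and $\Lambda^\alpha\zeta(x_0,t_0)\leq 0$ to contradict it; in particular the argument survives a touching point inside a vacuum region, where the right-hand side is simply $0$. You instead define $t_*$ as the first failure time and need \emph{strict} inequalities, $\rho(x_*,t_*)>0$ and $\Lambda^\alpha v(x_*,t_*)<0$, to push $v(x_*,t)$ negative just before $t_*$. This breaks down precisely in the case that \eqref{eq:h1}--\eqref{eq:h2} allow and that the paper's main theorem relies on (Case 2 in Section \ref{sec:blowup}): $\rho_0\equiv 0$ on an interval $[0,b]$ with $b>0$. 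There $v\equiv 0$ on the transported vacuum interval, the zero set of $v(\cdot,t_*)$ may consist only of the endpoints and the vacuum region, and any touching point available to you has $\rho(x_*,t_*)=0$, hence $\pa_t v(x_*,t_*)=0$ and no contradiction. Your proposed patch --- running the argument on the moving complement of the transported vacuum --- is only sketched and is not routine: you would need the boundary value of $v$ at the moving interface $X(t;b)$, a first-failure argument on a time-dependent domain, and you would still face a possible touching point at that interface, where $\rho=0$ again. Relatedly, even away from vacuum, the existence of an \emph{interior} point with $v(x_*,t_*)=0$ at the first failure time is asserted rather than established: at $t_*$ you only know $v(\cdot,t_*)\geq 0$, and negativity for $t>t_*$ could emanate from $x=0$, $x=1/2$, or the vacuum boundary, where your strictness-based step does not apply. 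The repair is to abandon strictness and argue as the paper does, contradicting an assumed strictly negative time derivative at the touching point using only $\rho\geq0$ and $\Lambda^\alpha v\leq0$; then the vacuum case costs nothing.
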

\begin{proof}
Let us denote $\zeta:=\pa_x\rho$, and write down its dynamics by
differentiating \eqref{eq:main} in $x$
\begin{equation}\label{eq:rhox}
\pa_t\zeta = -u\pa_x\zeta-2\zeta\pa_xu-\rho\pa_x^2u=
-u\pa_x\zeta-2\zeta\Lambda^\alpha\rho-\rho\Lambda^\alpha\zeta.
\end{equation}
By periodicity and \eqref{eq:h1}, we know $\zeta(\cdot,t)$ is odd, and so
\[\zeta(0,t)=\zeta(1/2,t)=0.\]
 
Our goal is to prove $\zeta(x,t)\geq0$, for all $x\in[0,1/2]$ and
$t\geq0$.
Assume the argument is false, then there exist
at time $t_0$ and position $x_0\in(0,1/2)$ such that the solution
satisfies
\begin{equation}\label{eq:break}
\zeta(x_0,t_0)=0,\quad\zeta(x,t_0)\geq0,~~\forall~x\in[0,1/2],\quad
\text{and}\quad\pa_t\zeta(x_0,t_0)<0,
\end{equation}
so that the break down first happens at $(x_0,t_0+)$.

Since $\zeta(\cdot,t_0)$ reaches a local minimum at $x_0$, clearly
$\pa_x\zeta(x_0,t_0)=0$. Therefore, the dynamics \eqref{eq:rhox} at
$(x_0,t_0)$ becomes
\begin{equation}\label{eq:rhoxm}
\pa_t\zeta(x_0,t)=-\rho(x_0,t_0)\Lambda^\alpha\zeta(x_0,t_0).
\end{equation}

From Proposition \ref{prop:max}, we know 
$\rho(x_0,t_0)\geq0$. So, we are left to estimate
$\Lambda^\alpha\zeta(x_0,t_0)$.

\begin{align*}
\Lambda^\alpha\zeta(x_0,t_0)
=&c_\alpha\int_\R\frac{\zeta(x_0,t_0)-\zeta(y,t_0)}{|x_0-y|^{1+\alpha}}dy
=-c_\alpha\sum_{l\in\mathbb{Z}}\int_{-1/2}^{1/2}\frac{\zeta(y,t_0)}{|x_0-y-l|^{1+\alpha}}dy\\
=&-c_\alpha\left[\sum_{l\in\mathbb{Z}}\int_0^{1/2}\frac{\zeta(-y,t_0)}{|x_0+y-l|^{1+\alpha}}dy
+\sum_{l\in\mathbb{Z}}\int_0^{1/2}\frac{\zeta(y,t_0)}{|x_0-y-l|^{1+\alpha}}dy\right]\\
=&-c_\alpha\int_0^{1/2}\zeta(y,t_0)\sum_{l\in\mathbb{Z}}
\left(\frac{1}{|x_0-y-l|^{1+\alpha}}-\frac{1}{|x_0+y-l|^{1+\alpha}}\right)dy.
\end{align*}

From \eqref{eq:break} and the following Lemma \ref{lem:pos}, we
conclude that $\Lambda^\alpha\zeta(x_0,t_0)\leq0$ and
hence $\pa_t\zeta(x_0,t_0)\geq0$. This contradicts with the last
inequality in \eqref{eq:break}.
\end{proof}

\begin{lemma}\label{lem:pos}
Suppose $x,y\in[0,1/2]$ and $\alpha>0$. Then
\[\sum_{l\in\mathbb{Z}}
\left(\frac{1}{|x-y-l|^{1+\alpha}}-\frac{1}{|x+y-l|^{1+\alpha}}\right)\geq0.\]
\end{lemma}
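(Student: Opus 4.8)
\emph{Proof proposal.} The plan is to recast the assertion as a monotonicity statement for a single $1$-periodic function. Write $\beta:=1+\alpha>1$ and, for $t\in\R\setminus\mathbb Z$, set $F(t):=\sum_{l\in\mathbb Z}|t-l|^{-\beta}$, an absolutely convergent series since $\beta>1$. The quantity in the lemma is precisely $F(x-y)-F(x+y)$, so it suffices to prove $F(x-y)\ge F(x+y)$. The degenerate case $x=y$ is harmless: the first series then contains an infinite term at $l=0$, and either this term is unmatched (so the left side equals $+\infty$) or $x=y\in\{0,\tfrac12\}$, in which case relabelling the index shows the two series agree term by term; so from now on I assume $x\ne y$, which (as $x,y\in[0,\tfrac12]$) also forces $x+y\notin\mathbb Z$, and all series below are finite.

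First I would record two features of $F$. The identities $F(t+1)=F(t)$ and $F(-t)=F(t)$ are immediate from the definition, so $F(t)$ depends only on $\operatorname{dist}(t,\mathbb Z)\in[0,\tfrac12]$. Next, $F$ is non-increasing on $(0,\tfrac12]$: since both the series and its term-by-term $t$-derivative converge uniformly on compact subsets of $(0,1)$, we get $F\in C^1((0,1))$ with $F'(t)=-\beta\sum_{l}\operatorname{sgn}(t-l)\,|t-l|^{-\beta-1}$; for $t\in(0,\tfrac12)$ one has $\operatorname{sgn}(t-l)=+1$ for $l\le 0$ and $\operatorname{sgn}(t-l)=-1$ for $l\ge 1$, and pairing the index $l=-k$ with $l=k+1$ for $k\ge 0$ gives
\[
F'(t)=-\beta\sum_{k\ge 0}\left(\frac{1}{(k+t)^{\beta+1}}-\frac{1}{(k+1-t)^{\beta+1}}\right)<0,
\]
because $0<k+t<k+1-t$ once $t<\tfrac12$ and $s\mapsto s^{-\beta-1}$ is decreasing; being continuous at $t=\tfrac12$, $F$ is then non-increasing on all of $(0,\tfrac12]$.

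To conclude, I would compare the arguments $x-y$ and $x+y$ via their distances to $\mathbb Z$. For $x,y\in[0,\tfrac12]$ one has $|x-y|\le x+y$ (since $x,y\ge 0$) and $|x-y|\le 1-(x+y)$ (since $x,y\le\tfrac12$); as $x+y\in[0,1]$ these combine to $|x-y|\le\min\{x+y,\,1-(x+y)\}=\operatorname{dist}(x+y,\mathbb Z)$, while $|x-y|=\operatorname{dist}(x-y,\mathbb Z)$ because $x-y\in[-\tfrac12,\tfrac12]$. Hence, using the two features above, $F(x-y)=F(|x-y|)\ge F(\operatorname{dist}(x+y,\mathbb Z))=F(x+y)$, which is the claim. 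The one mildly delicate point is the monotonicity of $F$ on the half-period; its crux is the symmetric pairing $\{-k,\,k+1\}$ of summation indices, which rewrites $F'$ as a sum of manifestly negative terms. Everything else is bookkeeping with the symmetries of $F$ and the two one-line inequalities valid on $[0,\tfrac12]^2$.
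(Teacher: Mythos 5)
Your proof is correct, and it packages the argument differently from the paper. The paper splits into the cases $y\le x$ and $y>x$, regroups the two lattice sums into $\sum_{l\ge1}\bigl(H_l(y)-H_l(-y)\bigr)$ with $H_l(z)=(l-1+x-z)^{-1-\alpha}-(l-x-z)^{-1-\alpha}$, and concludes from $H_l'\ge0$ on $[-1/2,1/2]$; the nonnegativity is thus checked index by index. You instead prove a single reusable statement: the periodized kernel $F(t)=\sum_{l}|t-l|^{-1-\alpha}$ is even, $1$-periodic, and decreasing in $\operatorname{dist}(t,\mathbb Z)$ on the half-period, and then reduce the lemma to the elementary inequality $\operatorname{dist}(x-y,\mathbb Z)=|x-y|\le\min\{x+y,1-(x+y)\}=\operatorname{dist}(x+y,\mathbb Z)$ valid on $[0,1/2]^2$. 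The crux is the same in both proofs — after pairing indices one compares $(k+t)^{-2-\alpha}$ with $(k+1-t)^{-2-\alpha}$ — but your version avoids the case split, makes the mechanism transparent (the sum measures how far $x\pm y$ sit from the lattice), and explicitly treats the degenerate terms at $x=y$, which the paper leaves implicit; the price is that you must justify term-by-term differentiation of the infinite series (uniform convergence on compacta away from $\mathbb Z$, as you note), whereas the paper only differentiates the finite expressions $H_l$. Both arguments are complete; yours is slightly more conceptual, the paper's slightly more self-contained.
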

\begin{proof}
We first consider the case when $y\leq x$. The sum can be rewritten as
\[
\sum_{l\geq1} \left[\left(\frac{1}{(l-1+x-y)^{1+\alpha}}-\frac{1}{(l-x-y)^{1+\alpha}}\right)-
\left(\frac{1}{(l-1+x+y)^{1+\alpha}}-\frac{1}{(l-x+y)^{1+\alpha}}\right)\right].
\]
Define
\[H_l(z)=\frac{1}{(l-1+x-z)^{1+\alpha}}-\frac{1}{(l-x-z)^{1+\alpha}}.\]
Then, the sum can be represented as
\[\sum_{l\geq1} (H_l(y)-H_l(-y)).\]
Since we have
\[H_l'(z)=(1+\alpha)\left[\frac{1}{(l-1+x-z)^{2+\alpha}}-\frac{1}{(l-x-z)^{2+\alpha}}\right]\geq0,\quad\forall~z\in[-1/2,1/2],\]
we get $H_l(y)-H_l(-y)\geq0$ for any $y\in[0,x]$. It implies that the sum
is non-negative.

The case when $y>x$ can be treated in the same way.
\end{proof}

\subsection{An estimate on velocity}
The velocity $u$ defined in \eqref{eq:velo} can be expressed in the integral form as follows:
\begin{equation}\label{eq:u}
u(x,t)=c_\alpha\int_\R\frac{\rho(y,t)-\rho(x,t)}{sgn(x-y)|x-y|^\alpha}dy.
\end{equation}

Fix $x\in[0,1/2]$ and $t\geq0$. We decompose the integrand and use
\eqref{eq:h1} to get
\begin{align*}
\frac{1}{c_\alpha}u(x,t)=&\int_0^\infty\frac{\rho(y,t)-\rho(x,t)}{|x+y|^\alpha}dy
+\int_0^x\frac{\rho(y,t)-\rho(x,t)}{|x-y|^\alpha}dy
-\int_x^\infty\frac{\rho(y,t)-\rho(x,t)}{|x-y|^\alpha}dy\\
=&\int_0^x(\rho(y,t)-\rho(x,t))
\left(\frac{1}{(x+y)^\alpha}+\frac{1}{(x-y)^\alpha}\right)dy\\
&+\int_x^\infty(\rho(y,t)-\rho(x,t))
  \left(\frac{1}{(x+y)^\alpha}-\frac{1}{(y-x)^\alpha}\right)dy=:I+II.\end{align*}
Due to monotonicity condition of $\rho(\cdot,t)$ \eqref{eq:h2}, we know that the first term $I\leq0$.
For the second term $II$, observe that
\[\frac{1}{(x+y)^\alpha}-\frac{1}{(y-x)^\alpha}<0,\quad\forall~y>x>0.\]
So, the integral in $II$ can be decompose into two parts:
\[
\int_x^\infty=\sum_{l=0}^\infty\int_{l+x}^{l+1-x}+\sum_{l=1}^\infty\int_{l-x}^{l+x}.
\]
Again, condition \eqref{eq:h2} implies that  for the first part
$\rho(y,t)-\rho(x,t)\geq0$, and for the second part $\rho(y,t)-\rho(x,t)\leq0$.
Let us denote $II=II_1+II_2$ where $II_1$ and $II_2$ represents the
corresponding integrals. Then, $II_1\leq0$ and $II_2\geq0$.

The next lemma shows $I+II_2\leq 0$, at least when $x$ is sufficiently
small.
\begin{lemma}\label{lem:dominate}
There exists a $\delta=\delta(\alpha)>0$, such that for all $x\in[0,\delta]$, $I+II_2\leq0$.
\end{lemma}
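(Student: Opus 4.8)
The plan is to prove the stronger statement that $II_2\le -I$, by comparing the two integrals after factoring out a common nonnegative weight. First, since $\rho(\cdot,t)$ is increasing on $[0,1/2]$ (preservation of monotonicity), we have $\rho(x)-\rho(y)\ge 0$ for $y\in[0,x]$, so discarding the positive term $1/(x+y)^\alpha$ in the definition of $I$ gives
\[
-I\;=\;\int_0^x\bigl(\rho(x)-\rho(y)\bigr)\Bigl(\tfrac{1}{(x-y)^\alpha}+\tfrac{1}{(x+y)^\alpha}\Bigr)dy\;\ge\;\int_0^x\frac{\rho(x)-\rho(y)}{(x-y)^\alpha}\,dy\;=\;\frac12\int_{-x}^x\frac{\rho(x)-\rho(|s|)}{(x-|s|)^\alpha}\,ds,
\]
the last equality because the integrand is even in $s$. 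On the other hand, by periodicity and even symmetry of $\rho(\cdot,t)$, the substitution $y=l+s$ on each interval $[l-x,l+x]$ (all terms being nonnegative) turns $II_2$ into
\[
II_2\;=\;\int_{-x}^x\bigl(\rho(x)-\rho(|s|)\bigr)\,K(s)\,ds,\qquad K(s):=\sum_{l\ge 1}\Bigl(\frac{1}{(l+s-x)^\alpha}-\frac{1}{(l+s+x)^\alpha}\Bigr),
\]
where again $\rho(x)-\rho(|s|)\ge 0$ on $[-x,x]$. Hence it suffices to prove the kernel inequality $K(s)\le \frac{1}{2(x-|s|)^\alpha}$ for every $s\in[-x,x]$, once $x$ is small enough.

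For that I would bound $K(s)$ term by term via the mean value theorem: each summand is at most $\frac{2\alpha x}{(l+s-x)^{\alpha+1}}\le\frac{2\alpha x}{(l-2x)^{\alpha+1}}$, and since $l-2x\ge l/2$ for $l\ge 1$ and $x\le 1/4$, this yields $K(s)\le C(\alpha)\,x$ with $C(\alpha):=2^{\alpha+2}\alpha\sum_{l\ge 1}l^{-(1+\alpha)}<\infty$ (the series converging as $\alpha+1>1$). Since $x-|s|\le x$, we have $\frac{1}{2(x-|s|)^\alpha}\ge\frac{1}{2x^\alpha}$, so the desired kernel inequality holds as soon as $C(\alpha)\,x\le\frac{1}{2x^\alpha}$, i.e. $x\le\bigl(2C(\alpha)\bigr)^{-1/(1+\alpha)}$. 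Taking $\delta=\delta(\alpha):=\min\bigl\{\tfrac14,\ (2C(\alpha))^{-1/(1+\alpha)}\bigr\}$, for all $x\in[0,\delta]$ we obtain $K(s)\le\frac{1}{2(x-|s|)^\alpha}$; multiplying by the nonnegative factor $\rho(x)-\rho(|s|)$ and integrating over $s$ then gives $II_2\le -I$, that is $I+II_2\le 0$.

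The step I expect to be the crux is the reduction itself: one should not try to bound $II_2$ and $-I$ separately by powers of $x$ --- indeed $-I$ can be made arbitrarily small when $\rho(\cdot,t)$ is nearly flat on $[0,x]$ (and then $\rho(x)$, hence $II_2$, is correspondingly small) --- but instead keep the common weight $\rho(x)-\rho(|s|)$ inside both integrals and reduce everything to the purely arithmetic comparison of kernels. After that the inequality is easy, because the far-field difference $K(s)$ is of size $O(x)$ while the near-diagonal kernel $1/(x-|s|)^\alpha$ is at least of size $x^{-\alpha}$, so the latter dominates for small $x$; a minor point to verify is that the bound on $K(s)$ is uniform for $s\in[-x,x]$, including $s$ near $\pm x$, which is immediate from $l+s-x\ge l-2x$.
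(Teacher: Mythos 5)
Your proof is correct and follows essentially the same route as the paper: keep the common nonnegative weight $\rho(x,t)-\rho(y,t)$ in both $I$ and $II_2$, bound the far-field kernel sum by $O(x)$ via the mean value theorem, and let the near-diagonal kernel of size $x^{-\alpha}$ dominate, yielding $\delta\sim c(\alpha)^{-1/(1+\alpha)}$. The only cosmetic difference is that you symmetrize $-I$ onto $[-x,x]$ and compare kernels pointwise, whereas the paper folds $II_2$ onto $[0,x]$ and bounds the combined bracket; the constants differ slightly but both choices work.
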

\begin{proof}
Let us first write
\[II_2=\int_{-x}^x(\rho(x,t)-\rho(y,t))\sum_{l=1}^\infty
  \left(\frac{1}{(y+l-x)^\alpha}-\frac{1}{(y+l+x)^\alpha}\right)dy.\]
Using mean value theorem, we have for $y\in(-x,x)$,
\[\frac{1}{(y+l-x)^\alpha}-\frac{1}{(y+l+x)^\alpha}\leq
\alpha(l-2x)^{-1-\alpha}\cdot(2x).
\]
Therefore,
\[\sum_{l=1}^\infty\frac{1}{(y+l-x)^\alpha}-\frac{1}{(y+l+x)^\alpha}\leq
2\alpha
x\left[(1-2x)^{-1-\alpha}+\int_1^\infty(z-2x)^{-1-\alpha}dz\right]
\leq Cx.
\]
For $x\leq1/4$, the last inequality holds with the choice of
$C=2^{\alpha+1}(1+2\alpha)$.

Now, let us put together $I$ and $II_2$.
\begin{align*}
I+II_2=&\int_0^x(\rho(x,t)-\rho(y,t))\left[-\frac{1}{(x+y)^\alpha}-\frac{1}{(x-y)^\alpha}\right.\\
&+\left.\sum_{l=1}^\infty\left(\frac{1}{(y+l-x)^\alpha}-\frac{1}{(y+l+x)^\alpha}
+\frac{1}{(-y+l-x)^\alpha}-\frac{1}{(-y+l+x)^\alpha}\right)\right]dy\\
\leq&
\int_0^x(\rho(x,t)-\rho(y,t))\left[-\frac{1}{(x-y)^\alpha}+0+2Cx\right]dy\\
\leq& (-x^{-\alpha}+2Cx)\int_0^x(\rho(x,t)-\rho(y,t))dy.
\end{align*}
We pick a small enough $\delta$ as follows
\begin{equation}\label{eq:delta}
\delta=\min\left\{\frac{1}{4},\left(\frac{1}{3C}\right)^{\frac{1}{1+\alpha}}\right\},
\end{equation}
Then, for any $x\in(0,\delta]$, we have $-x^{-\alpha}+2Cx\leq-Cx<0$. 

Also, the monotonicity condition \eqref{eq:h2} implies that
\[\int_0^x(\rho(x,t)-\rho(y,t))dy\geq0.\]

Therefore, conclude that $I+II_2\leq0$ for all $x\in[0,\delta]$.
\end{proof}

Lemma \ref{lem:dominate} directly implies the following estimate on
$u$.
\begin{theorem}\label{thm:roughu}
Let $\rho$ be a classical solution of \eqref{eq:cont}-\eqref{eq:init},
with periodic initial data $\rho_0$ satisfying
\eqref{eq:h1}-\eqref{eq:h2}. Let $\delta$ be defined as
\eqref{eq:delta}. Then, the velocity 
\[u(x,t)\leq 0,\quad\forall~x\in[0,\delta],~~t\geq0.\]
\end{theorem}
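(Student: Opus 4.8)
The plan is to simply assemble the sign information already collected in the decomposition of $u$ that precedes Lemma \ref{lem:dominate}. Fix $x\in[0,\delta]$ and $t\ge 0$. Recall that the text establishes
\[\frac{1}{c_\alpha}u(x,t)=I+II,\qquad II=II_1+II_2,\]
where $I$ is the integral over $[0,x]$, $II_1$ is the tail integral over the ``gaps'' $\bigcup_{l\ge 0}[l+x,l+1-x]$, and $II_2$ is the tail integral over the ``bumps'' $\bigcup_{l\ge 1}[l-x,l+x]$.

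First I would record the signs that follow from the monotonicity condition \eqref{eq:h2}, which is propagated in time by the earlier monotonicity result: $\rho(y,t)-\rho(x,t)\le 0$ on $[0,x]$ and on each bump $[l-x,l+x]$ (there $|y-l|\le x$, and $\rho(\cdot,t)$ is even, periodic, and increasing on $[0,1/2]$), while $\rho(y,t)-\rho(x,t)\ge 0$ on each gap $[l+x,l+1-x]$. Combined with the elementary kernel inequalities already noted, $\tfrac{1}{(x+y)^\alpha}+\tfrac{1}{(x-y)^\alpha}>0$ and $\tfrac{1}{(x+y)^\alpha}-\tfrac{1}{(y-x)^\alpha}<0$ for $0<x<y$, these give $I\le 0$ and $II_1\le 0$, whereas $II_2\ge 0$.

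The only term carrying the ``wrong'' sign is $II_2$, and the whole point is that it is absorbed by the favorable term $I$ — this is precisely Lemma \ref{lem:dominate}, which gives $I+II_2\le 0$ for every $x\in[0,\delta]$ with $\delta$ as in \eqref{eq:delta}. Putting the three pieces together,
\[\frac{1}{c_\alpha}u(x,t)=(I+II_2)+II_1\le 0,\]
and since $c_\alpha>0$ we conclude $u(x,t)\le 0$ for all $x\in[0,\delta]$ and $t\ge 0$, which is the assertion.

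As for the main obstacle: once Lemma \ref{lem:dominate} is available the theorem is essentially a one-line bookkeeping step, so there is nothing hard left here. The genuine work lies entirely in that lemma — bounding the periodic tail sum $\sum_{l\ge 1}\bigl((y+l-x)^{-\alpha}-(y+l+x)^{-\alpha}\bigr)$ by a constant multiple of $x$ via the mean value theorem, and then verifying that the local singular gain $-x^{-\alpha}$ dominates $2Cx$ once $x\le\delta$. If one preferred a self-contained argument for Theorem \ref{thm:roughu}, that same estimate would simply have to be reproduced inline; otherwise the proof is immediate.
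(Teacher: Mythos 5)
Your proposal is correct and follows essentially the same route as the paper: the paper's proof of Theorem \ref{thm:roughu} is exactly the bookkeeping step $u=c_\alpha\bigl((I+II_2)+II_1\bigr)\le 0$, with the signs of $I$, $II_1$, $II_2$ coming from the propagated monotonicity \eqref{eq:h2} and the kernel inequalities, and the crucial absorption $I+II_2\le 0$ supplied by Lemma \ref{lem:dominate}. Nothing is missing.
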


One may remove the smallness assumption on $x$ in Theorem
\ref{thm:roughu} by a more careful estimate on $II_2$. For our
purpose, it is enough to consider small $x$.

\section{An enhanced estimate on velocity}\label{sec:enhance}
In order to show singularity formations, we need a stronger estimate
on the velocity.
Recall
\[u(x,t)=(I+II_2)+II_1.\]
Lemma \ref{lem:dominate} ensures $I+II_2\leq0$.
The estimate $II_1\leq0$ simply follows for \eqref{eq:h2}.

We aim to improve our estimate on
\[II_1=-\sum_{l=0}^\infty\int_{l+x}^{l+1-x}
(\rho(y,t)-\rho(x,t))
\left(\frac{1}{(y-x)^\alpha}-\frac{1}{(y+x)^\alpha}\right)dy.\]

An easy observation is that, if $\rho(x,t)=\rhom$, then
$II_1=0$. In this case, it is not possible to get any improvement.
Therefore, we obtain an enhanced estimate when $\rho(x,t)$ is small.

\begin{theorem}\label{thm:improveu}
Let $\rho$ be a classical solution of \eqref{eq:cont}-\eqref{eq:init},
with periodic initial data $\rho_0$ satisfying
\eqref{eq:h1}-\eqref{eq:h2}, Let $\delta$ be defined as \eqref{eq:delta}.
Then, there exists a positive constant $A=A(\alpha, m, \rhom)>0$,
for any $(x,t)$ satisfying $x\in[0,\delta]$ and
\begin{equation}\label{eq:smallrho}
  \rho(x,t)\leq\frac{m}{2},
\end{equation}
the velocity
\begin{equation}\label{eq:improveu}
u(x,t)\leq -Ax.
\end{equation}
\end{theorem}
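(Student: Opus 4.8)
The plan is to exploit the only term we have not yet squeezed, namely $II_1$, which is $\leq 0$ and vanishes precisely when $\rho(x,t)=\rhom$. Under the smallness hypothesis \eqref{eq:smallrho}, $\rho(x,t)\le m/2$, so there should be a genuine gap between $\rho(y,t)$ on the bulk of $\T$ and the value $\rho(x,t)$, and this gap is what produces a strictly negative contribution of size comparable to $x$. Concretely, I would keep only the $l=0$ piece of $II_1$, which reads
\[
II_1 \le -\int_x^{1-x}(\rho(y,t)-\rho(x,t))\left(\frac{1}{(y-x)^\alpha}-\frac{1}{(y+x)^\alpha}\right)dy,
\]
and throw away the $l\ge 1$ terms since each of them is $\le 0$ by \eqref{eq:h2} (on $[l+x,l+1-x]$ one has $\rho(y,t)\ge\rho(x,t)$, and $(y-x)^{-\alpha}-(y+x)^{-\alpha}>0$). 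So it suffices to lower-bound the magnitude of this $l=0$ integral by $cx$ for a constant depending only on $\alpha,m,\rhom$. Combined with Lemma \ref{lem:dominate} (which gives $I+II_2\le 0$, in fact $\le -Cx\int_0^x(\rho(x,t)-\rho(y,t))dy$, a bonus we can discard), this yields $u(x,t)\le II_1 \le -Ax$.

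The two ingredients for the lower bound on $|II_1|$ are: (i) a pointwise lower bound on the kernel $(y-x)^{-\alpha}-(y+x)^{-\alpha}$ on some fixed sub-interval of $y$ bounded away from $x$ and from $1-x$ — say $y\in[1/4,1/2]$ when $x\le\delta\le 1/4$ — where a mean-value-theorem estimate gives $(y-x)^{-\alpha}-(y+x)^{-\alpha}\ge \alpha\,(y+x)^{-1-\alpha}\cdot 2x \ge c(\alpha)\,x$; and (ii) a lower bound on how much mass of $\rho(y,t)-\rho(x,t)$ sits on that sub-interval. For (ii) I would use that $\rho$ is non-decreasing on $[0,1/2]$ (Proposition on monotonicity), so on $[1/4,1/2]$ we have $\rho(y,t)\ge \rho(1/4,t)$; and conservation of total mass $m=\int_\T\rho=2\int_0^{1/2}\rho$ together with the bound $\rho\le\rhom$ forces $\rho(1/4,t)$ to be bounded below: indeed $m/2=\int_0^{1/2}\rho \le \tfrac14\rho(1/4,t)+\tfrac14\rhom$ by monotonicity (the integral over $[0,1/4]$ is $\le\tfrac14\rho(1/4,t)$, over $[1/4,1/2]$ is $\le\tfrac14\rhom$), so $\rho(1/4,t)\ge 2m-\rhom$. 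If $2m-\rhom>0$ this already beats $m/2$; in general one argues on whichever interval works, or simply notes that there is a fixed interval $J\subset(0,1/2)$ and a constant $\kappa=\kappa(m,\rhom)>0$ with $\rho(y,t)\ge m/2+\kappa$ on $J$ — this follows because $\int_0^{1/2}\rho=m/2$ with $0\le\rho\le\rhom$ increasing cannot have $\rho\le m/2$ on a set of full measure near $1/2$. Then on $J$, using \eqref{eq:smallrho}, $\rho(y,t)-\rho(x,t)\ge \kappa>0$, and
\[
|II_1| \ge \int_J \kappa\cdot c(\alpha)x\,dy = c(\alpha)\,\kappa\,|J|\,x =: Ax.
\]

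The main obstacle is item (ii): making the lower bound on $\int_J(\rho(y,t)-\rho(x,t))dy$ \emph{uniform in $t$} and dependent only on $m,\rhom$. The smallness assumption \eqref{eq:smallrho} supplies the upper bound on $\rho(x,t)$; the work is in extracting, purely from conservation of mass, the maximum principle, and monotonicity, a fixed interval $J$ bounded away from $0$ and $1/2$ on which $\rho(y,t)$ stays above $m/2$ by a definite amount. One clean way: since $\rho(\cdot,t)$ is increasing on $[0,1/2]$ with integral $m/2$ and sup $\le\rhom$, the value $\rho(3/8,t)$ satisfies $m/2=\int_0^{1/2}\rho\le \tfrac38\rho(3/8,t)+\tfrac18\rhom$, hence $\rho(3/8,t)\ge \tfrac{4m-\rhom}{3}$; if this exceeds $m/2$ we are done with $J=[3/8,1/2]$ and $\kappa=\tfrac{4m-\rhom}{3}-\tfrac m2$. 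If $4m-\rhom\le \tfrac{3m}{2}$, i.e. $\rhom$ is large relative to $m$, one instead pushes the interval closer to $1/2$ (e.g. use $\rho(1/2-\eta,t)$ and choose $\eta$ small depending on $m/\rhom$), trading the size of $|J|$ for the needed lower bound on $\rho$ there; the constant $A$ then degrades with $m/\rhom$ but remains positive and $t$-independent, which is all the theorem claims. Once $J$ and $\kappa$ are fixed, the kernel estimate is routine and the conclusion $u(x,t)\le -Ax$ is immediate.
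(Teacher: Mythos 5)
Your proposal is correct, and it follows the paper's overall strategy (discard $I+II_2$ via Lemma \ref{lem:dominate}, keep only the $l=0$ piece of $II_1$ on $[x,1/2]$, and lower-bound that integral by a constant times $x$ using monotonicity, the maximum principle, mass conservation, and the mean-value bound on the kernel). Where you diverge is in the central quantitative step. The paper works with the \emph{integrated} bound $\int_x^{1/2}(\rho(y,t)-\rho(x,t))\,dy\ge m/4$ (which uses \eqref{eq:smallrho} and even symmetry) together with the extremal Lemma \ref{lem:min}: since $h(x,\cdot)$ is positive and decreasing, the worst case is all the mass piled at $y=1/2$, and the kernel bound $h(x,y)\ge 2\alpha x$ there yields $A=\alpha m/(4\rhom)$ with no case analysis. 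You instead extract a \emph{pointwise} lower bound on $\rho$ near $y=1/2$: by monotonicity and $\rho\le\rhom$, $m/2=\int_0^{1/2}\rho\le(1/2-\eta)\rho(1/2-\eta,t)+\eta\rhom$, and your two-case discussion can be closed uniformly by simply taking, say, $\eta=m/(8\rhom)$, which gives $\rho(y,t)\ge 3m/4$ on $J=[1/2-\eta,1/2]$ for all $t$, hence $\rho(y,t)-\rho(x,t)\ge m/4$ there by \eqref{eq:smallrho}; combined with $h(x,y)\ge 2\alpha x$ (valid since $y+x\le 1$) this gives $A\gtrsim \alpha m^2/\rhom$. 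Both routes are valid and rest on the same structural facts; yours is more elementary in that it avoids the rearrangement-type Lemma \ref{lem:min} (indeed, since $h(x,y)\ge2\alpha x$ holds on all of $(x,1/2]$, even the paper's lemma is not strictly forced), at the price of a slightly worse constant and the explicit choice of $\eta$ depending on $m/\rhom$, whereas the paper's extremal lemma packages the "where does the mass sit" issue cleanly and yields the sharper $A=\alpha m/(4\rhom)$.
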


Let us explain the main idea of the proof. 
We focus on a better bound on $[x,1/2]$, and use the rough
bound by zero for the rest of the integrand.
\[II_1\leq-\int_x^{1/2}(\rho(y,t)-\rho(x,t))
  \left(\frac{1}{(y-x)^\alpha}-\frac{1}{(y+x)^\alpha}\right)dy.
\]
Denote the term that we concern by $III$.
\[III=\int_x^{1/2}(\rho(y,t)-\rho(x,t))h(x,y)dy,\quad
h(x,y)=\frac{1}{(y-x)^\alpha}-\frac{1}{(y+x)^\alpha}.\]

To obtain a lower bound on $III$, we need several observations. First,
for a fixed $x\in[0,\delta]$, $h(x,y)\geq0$ for any
$y\in(x,1/2]$. Moreover,
\begin{equation}\label{eq:hdec}\pa_yh(x,y)=-\alpha\left[\frac{1}{(y-x)^{\alpha+1}}-\frac{1}{(y+x)^{\alpha+1}}\right]\leq0.
\end{equation}
Next, we apply \eqref{eq:h3} \eqref{eq:h2}, and get
\begin{equation}\label{eq:rhob}
0\stackrel{\eqref{eq:h2}}{\leq}\rho(y,t)-\rho(x,t)\stackrel{\eqref{eq:h3}}{\leq} \rhom-\rho(x,t),\quad\forall~y\in(x,1/2].
\end{equation}
Moreover, the assumption \eqref{eq:smallrho} implies
\begin{equation}\label{eq:masslb}
\int_x^{1/2}\left(\rho(y,t)-\rho(x,t)\right)dy\stackrel{\eqref{eq:h2}}{\geq}
\int_0^{1/2}\left(\rho(y,t)-\rho(x,t)\right)dy=\frac{m}{2}-\frac{\rho(x,t)}{2}
\stackrel{\eqref{eq:smallrho}}{\geq}\frac{m}{4}.
\end{equation}

The following lemma is helpful to get a positive lower bound of $III$.

\begin{lemma}\label{lem:min}
Let $f$ be a positive decreasing function on $[a,b]$. $\lambda$ and $M$ are positive
constant such that $\lambda<M(b-a)$. Then,
\[\min_\omega\left\{\int_a^b\omega(x)f(x)dx ~\left|~ 0\leq\omega(x)\leq M,
\int_a^b\omega(x)dx\geq \lambda\right.\right\}=M\int^b_{b-\frac{\lambda}{M}}f(x)dx.\]
The minimum is attained at 
\[\omega_{\min}(x)=\begin{cases}0&a\leq x<b-\frac{\lambda}{M}\\ M&b-\frac{\lambda}{M}\leq x\leq b\end{cases}.\]
\end{lemma}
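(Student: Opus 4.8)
The plan is a bathtub-principle (rearrangement) argument. Set $c:=b-\lambda/M$. Since the hypothesis $\lambda<M(b-a)$ gives $\lambda/M<b-a$, we have $c\in(a,b)$, so the splitting of $[a,b]$ into $[a,c)$ and $[c,b]$ used below is legitimate. (Throughout, the competition is tacitly over measurable $\omega$; note $\omega f\ge0$, so every member of the class has a well-defined integral in $[0,\infty]$, and since $f$ is real-valued and decreasing it is bounded on $[c,b]$, so $M\int_c^b f\,dx$ is finite.) First I would check that $\omega_{\min}$ is admissible: clearly $0\le\omega_{\min}\le M$, and $\int_a^b\omega_{\min}\,dx=M(b-c)=\lambda\ge\lambda$. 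Hence $\omega_{\min}$ lies in the competition class, and it will follow that the asserted minimal value $M\int_c^b f\,dx$ is actually attained.

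It then remains to show $\int_a^b\omega f\,dx\ge\int_a^b\omega_{\min}f\,dx$ for every admissible $\omega$. The key point is that $\omega-\omega_{\min}$ is sign-definite on each piece: on $[a,c)$ we have $\omega_{\min}=0$, so $\omega-\omega_{\min}=\omega\ge0$; on $[c,b]$ we have $\omega_{\min}=M$, so $\omega-\omega_{\min}=\omega-M\le0$. Because $f$ is decreasing, $f(x)\ge f(c)$ for $x\in[a,c)$ and $f(x)\le f(c)$ for $x\in[c,b]$; in either case replacing the constant $f(c)$ by $f(x)$ in the product with the sign-definite factor $(\omega-\omega_{\min})(x)$ can only increase it, i.e.
\[
\bigl(\omega-\omega_{\min}\bigr)(x)\,f(x)\;\ge\;\bigl(\omega-\omega_{\min}\bigr)(x)\,f(c)\qquad\text{for a.e. }x\in[a,b].
\]
Integrating this pointwise inequality and using $\int_a^b\omega_{\min}\,dx=\lambda$,
\[
\int_a^b\bigl(\omega-\omega_{\min}\bigr)f\,dx\;\ge\;f(c)\int_a^b\bigl(\omega-\omega_{\min}\bigr)dx\;=\;f(c)\left(\int_a^b\omega\,dx-\lambda\right)\;\ge\;0,
\]
the last inequality because $\int_a^b\omega\,dx\ge\lambda$ and $f(c)>0$. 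This is exactly the desired inequality, and combined with the admissibility of $\omega_{\min}$ it proves the lemma.

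I do not anticipate a genuine obstacle: the argument is short and self-contained. The two points needing care are (i) recording that $c\in(a,b)$, which is precisely where the hypothesis $\lambda<M(b-a)$ is used (it guarantees the optimal support $[c,b]$ fits inside $[a,b]$), and (ii) the sign bookkeeping on $[c,b]$, where $\omega-\omega_{\min}\le0$ and $f(x)\le f(c)$, so the product \emph{increases} when $f(x)$ replaces $f(c)$ — a direction that is easy to reverse by mistake but routine to verify.
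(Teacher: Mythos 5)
Your proof is correct and follows essentially the same route as the paper: verify that $\omega_{\min}$ is admissible, then show $\int_a^b(\omega-\omega_{\min})f\,dx\ge0$ by comparing $f$ with its value at $c=b-\lambda/M$ on the two pieces where $\omega-\omega_{\min}$ has a definite sign (the paper does this by splitting the integral, you by a pointwise inequality, which is the same estimate). Your added remarks that $c\in(a,b)$ and that the constraint class is nonempty are fine but not needed beyond what the paper records.
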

\begin{proof}
First, it is easy to check $\omega_{\min}$ satisfies
\begin{equation}\label{eq:omegares}
 0\leq\omega(x)\leq M,\quad\int_a^b\omega(x)dx\geq \lambda.
\end{equation}
We will prove that for any $\omega$ which satisfies
\eqref{eq:omegares}, $\int_a^b(\omega(x)-\omega_{\min}(x))f(x)dx\geq0$.
Compute
\[\int_a^b(\omega(x)-\omega_{\min}(x))f(x)dx
=\int_a^{b-\frac{\lambda}{M}}\omega(x)f(x)dx+\int_{b-\frac{\lambda}{M}}^b(\omega(x)-M)f(x)dx.\]
From the first condition in \eqref{eq:omegares}, we know
$\omega(x)\geq0$ and $\omega(x)-M\leq0$. Together with the assumption
that $f$ is positive and decreasing, we obtain
\begin{align*}\int_a^b(\omega(x)-\omega_{\min}(x))f(x)dx
\geq&~
      f(b-\frac{\lambda}{M})\int_a^{b-\frac{\lambda}{M}}\omega(x)dx+f(b-\frac{\lambda}{M})\int_{b-\frac{\lambda}{M}}^b(\omega(x)-M)dx\\
=&~f(b-\frac{\lambda}{M})\left[\int_a^b\omega(x)dx-M\cdot\frac{\lambda}{M}\right]
\geq f(b-\frac{\lambda}{M})(\lambda-\lambda)=0.
\end{align*}
Hence, we conclude
\[\min_{\omega~ \text{satisfies}~
    \eqref{eq:omegares}}\int_a^b\omega(x)f(x)dx=
\int_a^b\omega_{\min}(x)f(x)dx=M\int^b_{b-\frac{\lambda}{M}}f(x)dx.\]
\end{proof}

Putting together \eqref{eq:hdec}, \eqref{eq:rhob} and
\eqref{eq:masslb}, 
we can apply Lemma \ref{lem:min} with 
\[f(y)=h(x,y),~~ \omega(y)=\rho(y,t)-\rho(x,t), ~~
  \lambda=\frac{m}{4}, ~~ M=1-\rho(x,t), ~~ a=x, ~~ b=\frac{1}{2}.\]
Then, 
\[III\geq(1-\rho(x,t))\int_{\frac{1}{2}-\frac{m}{4(\rhom-\rho(x,t))}}^{\frac{1}{2}}h(x,y)dy
  \geq(1-\rho(x,t))\int_{\frac{1}{2}-\frac{m}{4\rhom}}^{\frac{1}{2}}h(x,y)dy.\]

Using the mean value theorem, we have
\[h(x,y)\geq\frac{\alpha}{(y+x)^{1+\alpha}}\cdot(2x)\geq2\alpha x.\]
Finally, we obtain
\[III\geq\frac{1}{2}\cdot\frac{m}{4\rhom}\cdot(2\alpha x)=\frac{\alpha
  m}{4\rhom}x,\]
and therefore
\[II_1\leq-\frac{\alpha m}{4\rhom}x.\]
We end up with the improved estimate \eqref{eq:improveu} with
$A=\frac{\alpha m}{4\rhom}$.

\section{Singularity formation}\label{sec:blowup}
In this section, we prove Theorem \ref{thm:main}: for any smooth
initial data satisfying \eqref{eq:h1}-\eqref{eq:h2}, the solution
loses uniform $C^1$ regularity.

We will argue by contradiction. Suppose the solution is uniformly
$C^1$ in all time, then there exists $\eps>0$ such that
\begin{equation}\label{eq:usmall}
  u(\eps, t)\leq \frac{m}{2}, \quad\forall~t\geq0.
\end{equation}
Without loss of generosity, we assume $\eps\leq\delta$. In fact, if
$\eps>\delta$, $u(\delta,t)\leq u(\eps,t)\leq\frac{m}{2}$ by
\eqref{eq:h2}. We can then take $\eps=\delta$.

Let us denote $X(t; x)$ be the characteristic path initiated at $x$, satisfying
\[\frac{d}{dt}X(t; x)=u(X(t; x), t),\quad X(0; x)=x.\]
By symmetry, we know $u(0,t)=0$ and hence $X(t; 0)=0$ for all $t\geq0$.

Define $m(x,t)$ be the mass in the interval $[0,x]$ at time $t$:
\[m(x,t):=\int_0^x\rho(x,t)dx.\]
We apply Proposition \ref{lem:masscons} and get
\begin{equation}\label{eq:localmasscon}
  m(X(t;x),t)=m(x,0).
\end{equation}

Let $x_0=\inf\{x\geq0~:~\rho_0(x)>0\}$. By \eqref{eq:h1} and
\eqref{eq:h2}, we have
\[\text{supp}(\rho_0)=(x_0, 1-x_0).\]

We shall proceed with two cases.
\subsection*{Case 1: $x_0<\eps$}
By the definition of $x_0$, we know $\rho_0(\eps)>0$. Moreover, $m(\eps, 0)>0$.

By Theorem \ref{thm:roughu}, we know $X(t; \eps)\leq\eps$ for any $t\geq0$. Then, the
assumption \eqref{eq:usmall} ensures that $\rho(X(t;
\eps))\leq\frac{m}{2}$ in all time. This allows us to use the enhanced
estimate, Theorem \ref{thm:improveu}, and get
\[u(X(t;\eps),t)\leq-AX(t;\eps),\]
where $A>0$ does not depend on $\eps$ or $t$.

Then, we can integrate along the characteristic path, and get
\[X(t;\eps)\leq \eps e^{-At}.\]

A simple estimate yields
\[m(X(t;\eps),t)=\int_0^{X(t;\eps)}\rho(x,t)dx\stackrel{\eqref{eq:h2}}{\leq}
X(t;\eps)\rho(X(t;\eps),t)\leq\frac{\eps m}{2}e^{-At}.\]

This contradicts with the mass conservation \eqref{eq:localmasscon} if
we pick $t$ large enough, more precisely,
\begin{equation}\label{eq:tlarge}
  t>\frac{1}{A}\log\frac{\eps m}{2m(\eps,0)}.
\end{equation}

\begin{remark}
  If $\rho_0(x)=0$ only at a single point $x=0$, then $x_0=0$. No
  matter what $\eps$ is, we are always under this case. Therefore, we
  have already shown the singularity formation. Note that the initial
  data \eqref{eq:initcccf} lie into this category.
\end{remark}

\subsection*{Case 2: $x_0\geq\eps$}
If $x_0>0$, namely $\rho_0(x)=0$ in an interval $[-x_0,x_0]$, it is
possible that $\eps\leq x_0$.
Then, $m(\eps,0)=0$. Consequently, the right hand
side of \eqref{eq:tlarge} is not bounded any more.

To obtain a contradiction, we first examine the characteristic path
starting at $x_0$. Since $\rho_0(x_0)=0$, it is easy to see that
$\rho_0(X(t;x_0),t)=0$ at any time. We can apply the enhanced estimate
\eqref{eq:improveu} at $(X(t;x_0),t)$, and obtain
\[X(t;x_0)\leq x_0e^{-At}.\]
Then, there exists a finite time $T_*$ such that
$X(t;x_0)\leq\eps$. For instance, one can take
\[T_*=\frac{1}{A}\log\frac{x_0}{\eps}.\]

Now, we consider the characteristic path that goes through the point
$(\eps, T_*+1)$. If the flow is smooth, we can track back and find a
unique point $x_*$ such that $\eps=X(T_*+1; x_*)$.

Moreover, as $X(T_*+1; x_0)<\eps$, we have $x_0<x_*$. By the
definition of $x_0$, we know $\rho_0(x_*)>0$ and hence $m(x_*,0)>0$.

Now, we can repeat the argument in case 1 along $X(t; x_*)$.
First, apply the enhanced estimate \eqref{eq:improveu} at $(X(t;
x_*),t)$ for $t\geq T_*+1$ and get
\[X(t; x_*)\leq \eps e^{-A(t-(T_*+1))},\quad\forall~t\geq T_*+1.\]
Next, we estimate the mass
\[m(X(t;x_*),t)\leq
  X(t;x_*)\rho(X(t;x_*),t)\leq\frac{\eps m}{2}e^{-A(t-(T_*+1))},
  \quad\forall~t\geq T_*+1.\]
Finally, take $t$ large enough
\[  t>\frac{1}{A}\log\frac{\eps m}{2m(x_*,0)}+(T_*+1).\]
Then, $m(X(t;x_*),t)<m(x_*,0)$, which contradicts with the mass
conservation \eqref{eq:localmasscon}.

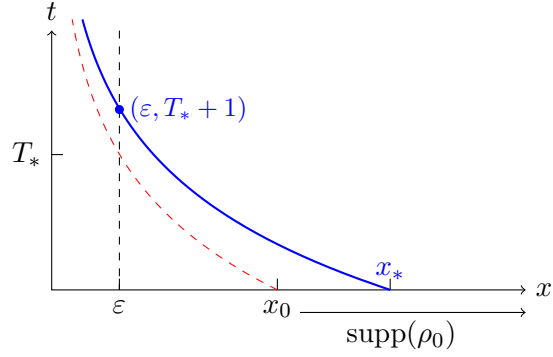
\begin{figure}[h]
\begin{tikzpicture}[scale=1.5]

    \coordinate (y) at (0,2.3);
    \coordinate (x) at (4.2,0);
    \draw[<->] (y) node[above] {$t$} -- (0,0) --  (x) node[right] {$x$};

    \draw (.6,0.1) -- (.6,0) node[below] {$\eps$};
    \draw (2,0.1) -- (2,0) node[below] {$x_0$};
    \draw (0.1,1.2) -- (0,1.2) node[left] {$T_*$};
    \draw (3,0.1) -- (3,0) node[above, blue] {$x_*$};
    \draw [->] (2.2,-.2) -- (4.2, -.2);
    \node at (3.1, -.4) {$\text{supp}(\rho_0)$};

    \draw[dashed] (.6,2.3) -- (.6,0);

     \draw[dashed, domain=0:2.4,smooth,variable=\y,red]  plot ({2*exp(-\y)},{\y});
     \draw[thick, domain=0:2.4,smooth,variable=\y,blue]  plot ({3*exp(-\y)},{\y});

     \filldraw[blue] (.6,1.6) circle (1pt) node[right] {\small$(\eps, T_*+1)$};
  \end{tikzpicture}
\caption{The characteristic path that leads to a contradiction}\label{fig:point}
\end{figure}

\section{Extension to systems in multi-dimensions}\label{sec:multi}
In this section, we extend our main result to systems in higher
dimensions. The main idea is to consider $\rho_0(\x)=\rho_0(x_1)$ and
reduce the system to 1D so that our construction can be used.

\subsection{Fractional porous medium flow}\label{sec:multipmf}
Let us recall the fractional porous medium flow in multi-dimension
\begin{equation}\label{eq:multi}
\pa_t\rho+\nabla\cdot(\rho\u)=0,\quad
\u=\nabla\Lambda^{\alpha-2}\rho, 
\end{equation}
with $\x=(x_1,\cdots,x_n)\in\T^n$ and $0<\alpha<2$.

Fix any time $t$ and drop the time dependence for simplicity.
Assume $\rho(\x)=\rho(x_1)$, namely $\rho$ is a constant in
$(x_2,\cdots,x_n)$ variables.
We calculate the velocity field $u$, starting with
\[\Lambda^{\alpha-2}\rho
=c_{n,\alpha}\int_{\R^n}\rho(\x-\y)\frac{1}{|\y|^{n+\alpha-2}}d\y.\]
Then, we obtain $u$ by taking the gradient of the potential
\[u_i(\x)=\pa_{x_i}\Lambda^{\alpha-2}\rho
=c_{n,\alpha}\int_{\R^n}(\rho(x_1-y_1)-\rho(x_1))\frac{y_i}{|\y|^{n+\alpha}}d\y.\]

For $i=2,\cdots,n$, we have
\begin{equation}\label{eq:u2}
u_i(\x)=c_{n,\alpha}\int_\R(\rho(x_1-y_1)-\rho(x_1))
\left[\int_{\R^{n-1}}\frac{y_i}{|\y|^{n+\alpha}}dy_2\cdots
  dy_n\right]dy_1=0.
\end{equation}
The last eqaulity is due to oddness of the inside integral with
respect to $y_i$.

For $i=1$,
\[u_1(\x)=c_{n,\alpha}\int_\R(\rho(x_1-y_1)-\rho(x_1))y_1
\left[\int_{\R^{n-1}}\frac{1}{|\y|^{n+\alpha}}dy_2\cdots
  dy_n\right]dy_1.\]
Compute the integral inside,
\begin{align*}
\int_{\R^{n-1}}\frac{1}{|\y|^{n+\alpha}}&dy_2\cdots dy_n
=\int_{\R^{n-1}}\left(y_1^2+y_2^2+\cdots+y_n^2\right)^{-\frac{n+\alpha}{2}}dy_2\cdots
  dy_n\\
=&~|y_1|^{-(n+\alpha)}\int_{\R^{n-1}}\left(1+y_2^2+\cdots+y_n^2\right)^{-\frac{n+\alpha}{2}}|y_1|^{n-1}dy_2\cdots
   dy_n\\
=&~|y_1|^{-1-\alpha}\omega_{n-1}\int_0^\infty(1+r^2)^{-\frac{n+\alpha}{2}}r^{n-2}dr
=c'_{n,\alpha}|y_1|^{-1-\alpha}.
\end{align*}
Here, $\omega_{n}$ denotes the area of the unit sphere in $n$
dimension. The constant $c'_{n,\alpha}$ is clearly positive, finite,
and only depend on $n$ and $\alpha$.

Then, we obtain
\begin{equation}\label{eq:u1}
u_1(\x)=c_{n,\alpha}c'_{n,\alpha}\int_\R\frac{\rho(x_1-y_1)-\rho(x_1)}{sgn(y_1)|y_1|^{\alpha}}dy_1.
\end{equation}
So, $u_1(\x)=u_1(x_1)$ is also a constant in
$(x_2,\cdots,x_n)$. Moreover, as a function of $x_1$, the expression
of $u_1$ is the same as \eqref{eq:u}, except the constant $c_\alpha$
might be different.

From \eqref{eq:u2} and \eqref{eq:u1}, we have
\[\nabla\cdot(\rho(\x) \u(\x))=\pa_{x_1}(\rho(x_1)u_1(x_1)).\]
This implies if $\rho_0(\x)=\rho_0(x_1)$, then
$\rho(\x,t)=\rho(x_1,t)$. Moreover, $(\rho,u_1)$ as functions of
$x_1$, will be the solution of the 1D system
\eqref{eq:cont}-\eqref{eq:init}.
Hence, Theorem \ref{thm:main} can be extended to multi-dimension, with
the choice of initial data $\rho_0(\x)=\rho_0(x_1)$, where $\rho_0$ as
a function of $x_1$ is chosen the same way as in the 1D case. The
different constant in \eqref{eq:u1} mentioned above will only affect
the choice of $\delta$ throughout the proof.

We summarize the discussion to the following theorem.
\begin{theorem}\label{thm:multid}
Consider the initial value problem of system \eqref{eq:multi} in the periodic domain
$\T^n$. There exists a family of smooth initial data $\rho_0$ such
that the solution loses uniform $C^1$ regularity.
\end{theorem}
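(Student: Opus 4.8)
The plan is to reduce the $n$-dimensional problem to the one already solved by choosing initial data that are constant in the variables $x_2,\dots,x_n$, and then to quote Theorem \ref{thm:main}. Fix a one-dimensional profile $\tilde\rho_0\colon\T\to[0,\infty)$ of the type produced by Theorem \ref{thm:main}, i.e.\ smooth and satisfying \eqref{eq:h1}--\eqref{eq:h2}, and set $\rho_0(\x):=\tilde\rho_0(x_1)$ on $\T^n$. The first step is to check that this stripe structure is compatible with \eqref{eq:multi}: for $\rho(\x)=\tilde\rho(x_1)$ the kernel computations \eqref{eq:u2} and \eqref{eq:u1} give $u_i\equiv0$ for $i=2,\dots,n$ (oddness of the kernel in $y_i$), while $u_1(\x)=u_1(x_1)$ equals the same singular integral as in \eqref{eq:u} up to the positive finite constant $c_{n,\alpha}c'_{n,\alpha}$. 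Hence $\nabla\cdot(\rho\u)=\pa_{x_1}\!\big(\tilde\rho(x_1)u_1(x_1)\big)$ is again a stripe, so the restriction of \eqref{eq:multi} to stripe profiles is exactly the 1D system \eqref{eq:cont}--\eqref{eq:velo}, with the only change that the velocity is $c\,H\Lambda^{\alpha-1}\tilde\rho$ for a constant $c=c_{n,\alpha}c'_{n,\alpha}>0$.

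The second step is to absorb this harmless constant. Rescaling time $t\mapsto t/c$ turns the stripe equation into \eqref{eq:cont}--\eqref{eq:velo} verbatim; equivalently, one may rerun the arguments of Sections \ref{sec:est}--\ref{sec:blowup} with $c_\alpha$ replaced by $c\,c_\alpha$, which only alters the numerical values of $\delta$ in \eqref{eq:delta} and of the constant $A$. Either way, Theorem \ref{thm:main} applies to the 1D profile and yields a classical solution $\tilde\rho(\cdot,t)$ with initial data $\tilde\rho_0$ that is not bounded in $C^1(\T)$ uniformly in $t$.

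The third step transfers this back to $\T^n$. By uniqueness of classical solutions of \eqref{eq:multi}, the solution issuing from $\rho_0(\x)=\tilde\rho_0(x_1)$ must coincide with the stripe solution $\rho(\x,t)=\tilde\rho(x_1,t)$ for as long as it remains $C^1$. Since the $C^1$ norm of $\rho(\cdot,t)$ on $\T^n$ equals that of $\tilde\rho(\cdot,t)$ on $\T$, the loss of uniform $C^1$ control for $\tilde\rho$ forces the same for $\rho$; put contrapositively, a uniform-in-time $C^1$ bound on $\rho$ would descend to a uniform $C^1$ bound on the associated 1D solution, contradicting Theorem \ref{thm:main}. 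This proves Theorem \ref{thm:multid}.

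The only point requiring care is the bookkeeping in the second step together with the uniqueness invoked in the third: one must know that the stripe ansatz is \emph{the} classical solution, not merely \emph{a} solution. I expect this to be entirely routine, since no new analytic input is needed beyond the explicit, already-performed kernel computation \eqref{eq:u2}--\eqref{eq:u1}, and the local theory for \eqref{eq:multi} is standard.
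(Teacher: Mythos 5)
Your proposal is correct and follows essentially the same route as the paper: the stripe ansatz $\rho_0(\x)=\rho_0(x_1)$, the kernel computations \eqref{eq:u2}--\eqref{eq:u1} showing $u_i\equiv0$ for $i\geq2$ and $u_1$ reducing to the 1D velocity \eqref{eq:u} up to a positive constant, and then an application of Theorem \ref{thm:main}, with the constant absorbed by noting it only shifts $\delta$ (and $A$). Your extra remarks on time rescaling and on uniqueness of the classical solution make explicit points the paper leaves implicit, but they do not change the argument.
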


\subsection{Fractional Euler-Alignment system}
The multi-dimensional Euler-Alignment system with singular
influence function takes the form
\begin{equation}\label{eq:EAmulti}
\pa_t\rho+\nabla\cdot(\rho \u)=0,\quad
\pa_t\u+\u\cdot\nabla
\u=c_{n,\alpha}\int_{\R^n}\frac{\u(y,t)-\u(x,t)}{|y-x|^{n+\alpha}}\rho(y,t)dy.
\end{equation}

Let $G=\nabla\cdot \u-\Lambda^\alpha\rho$. Then, the dynamics of $G$
reads
\[\pa_tG+\nabla\cdot(G \u)=\text{tr}(\nabla \u^{\otimes2})-(\nabla\cdot
 \u)^2.\]

Note that In the 1D case, the right hand side becomes $(\pa_xu)^2-(\pa_xu)^2=0$. Then,
the dynamics becomes \eqref{eq:EA}, and as a special case of
$G\equiv0$, we reach our system \eqref{eq:cont}-\eqref{eq:velo}.

However, the right hand side is not necessarily zero in higher
dimensions. This quantity is known as \emph{spectral gap}. In particular, it
destroys the maximum principle on $G$, and hence $G_0\equiv0$ does not
imply $G(\cdot,t)\equiv0$.

Therefore, fractional porous median flow \eqref{eq:multi} is not a
special case of the Euler-Alignment system, except in 1D. The global
regularity on \eqref{eq:EAmulti} for $\rho_0>0$ is an open
problem. The main difficulty is the lack of apriori control of the
spectral gap.

To construct $\rho_0\geq0$ which leads to singularity formations, we can
avoid the difficulty by select a special family of initial data such
that the spectral gap is zero in all time.

The choices of $(\rho_0, \u_0)$ is the same as Section
\ref{sec:multipmf}:
\[\rho_0(\x)=\rho_0(x_1),\quad
(u_0)_1(\x)=(u_0)_1(x_1),\quad (u_0)_i(\x)=0,~~\forall~i=2,\cdots,n.\]

By the same argument, we know such structure preserves in
time. So,
\[\text{tr}(\nabla \u^{\otimes2})-(\nabla\cdot\u)^2=(\pa_{x_1}u_1)^2-(\pa_{x_1}u_1)^2=0.\]

Therefore, we pick $\rho_0$ the same as in Theorem \ref{thm:multid},
and $\u_0=\nabla\Lambda^{\alpha-2}\rho_0$. The solution will form singularities
the same way as \eqref{eq:multi}.

\begin{corollary}\label{cor:multid}
Consider the initial value problem of system \eqref{eq:EAmulti} in the periodic domain
$\T^n$. There exists a family of smooth initial data $(\rho_0,\u_0)$ such
that the solution loses uniform $C^1$ regularity.
\end{corollary}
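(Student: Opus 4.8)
The plan is to reduce Corollary~\ref{cor:multid} to the multi-dimensional porous medium result Theorem~\ref{thm:multid} by exhibiting a \emph{planar} family of solutions of \eqref{eq:EAmulti} along which the spectral gap $\text{tr}(\nabla\u^{\otimes2})-(\nabla\cdot\u)^2$ vanishes identically, so that the $G$-dynamics degenerates and the system collapses to \eqref{eq:multi}.

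Concretely, I would take the same data as in Section~\ref{sec:multipmf}: a one-dimensional profile $\rho_0$ for which Theorem~\ref{thm:main} applies, lifted to $\rho_0(\x)=\rho_0(x_1)$ on $\T^n$, together with $\u_0=\nabla\Lambda^{\alpha-2}\rho_0$. By the computations \eqref{eq:u2}-\eqref{eq:u1}, this $\u_0$ is planar, $(u_0)_1=(u_0)_1(x_1)$ and $(u_0)_i\equiv0$ for $i\ge2$, with $(u_0)_1$ a positive multiple of the 1D velocity $H\Lambda^{\alpha-1}\rho_0$; and, exactly as in the reformulation \eqref{eq:EA}, $G_0:=\nabla\cdot\u_0-\Lambda^\alpha\rho_0\equiv0$.

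Two things have to be checked. First, that the planar structure $\rho(\x,t)=\rho(x_1,t)$, $u_1(\x,t)=u_1(x_1,t)$, $u_i\equiv0$ for $i\ge2$ is propagated by \eqref{eq:EAmulti}: substituting the ansatz, the alignment force in directions $i\ge2$ vanishes by the oddness argument behind \eqref{eq:u2}, the continuity equation becomes $\pa_t\rho+\pa_{x_1}(\rho u_1)=0$, and the first momentum equation becomes the 1D Euler-Alignment momentum equation, the remaining equations reducing to $0=0$; hence the ansatz solves \eqref{eq:EAmulti}, and by uniqueness of smooth solutions the solution launched from the above data retains this form as long as it is smooth. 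Second, that along such fields $\nabla\u$ has only the $(1,1)$ entry, so $\text{tr}(\nabla\u^{\otimes2})=(\pa_{x_1}u_1)^2=(\nabla\cdot\u)^2$ and the spectral gap vanishes; the $G$-equation then becomes the pure transport law $\pa_t G+\nabla\cdot(G\u)=0$, so $G_0\equiv0$ forces $G\equiv0$, and the $\rho$-dynamics is exactly \eqref{eq:multi}.

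It remains to invoke Section~\ref{sec:multipmf}: the planar $\rho$ agrees, in the $x_1$ variable, with a solution of the 1D system \eqref{eq:cont}-\eqref{eq:init}, so Theorem~\ref{thm:main} provides a $\rho_0$ for which $\|\rho(\cdot,t)\|_{C^1}$ is unbounded in $t$; since the solution on $\T^n$ inherits this blow-up, $(\rho,\u)$ loses uniform $C^1$ regularity, which is the corollary. The only genuinely delicate step is the propagation of the planar structure: it relies on local well-posedness and uniqueness for smooth solutions of \eqref{eq:EAmulti}, so that the explicit planar solution is \emph{the} solution; granting that, the vanishing of the spectral gap, the identity $G\equiv0$, and the appeal to the 1D blow-up mechanism are all routine calculations mirroring Section~\ref{sec:multipmf} and the reformulation \eqref{eq:EA}.
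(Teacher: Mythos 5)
Your proposal is correct and follows essentially the same route as the paper: choose the planar data $\rho_0(\x)=\rho_0(x_1)$, $\u_0=\nabla\Lambda^{\alpha-2}\rho_0$, argue that the one-dimensional structure propagates so the spectral gap vanishes and $G\equiv0$ persists, and thereby reduce to the porous-medium blow-up of Theorem \ref{thm:multid}. Your additional remarks on uniqueness of smooth solutions simply make explicit the step the paper summarizes as ``by the same argument, such structure preserves in time.''
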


\section{Further discussions}\label{sec:discuss}
Theorem \ref{thm:main} shows singularity formations for equations
\eqref{eq:cont}-\eqref{eq:init}. However, it does not specify whether
the blowup happens in finite time or when time approaches infinity.

For the special case with $\alpha=1$ and initial data
\eqref{eq:initcccf}, a finite time blowup was shown in
\cite{chae2005finite}. Therefore, a reasonable conjecture would be,
the singularity formations happen at a finite time.

The proof of the conjecture will require a stronger estimate on the
velocity field
\[u(x,t)\leq-Cx^\gamma,\]
with $\gamma<1$. This will ensure the characteristic paths intersect in
finite time, causing a blowup.
To obtain the strong inequality, a delicate estimate to the singular
integral near the singularity is required.
We will leave it for future investigations.

\bigskip\noindent
\textbf{Acknowledgments.} 
This work is supported by NSF grant DMS 1853001.
The author would like to thank Tam Do, Alexander Kiselev and Xiaoqian
Xu for valuable discussions.
The author also thank the referees for valubale suggestions.

\bibliographystyle{plain}
\bibliography{singular}

\begin{thebibliography}{10}

\bibitem{biler2010nonlinear}
Piotr Biler, Grzegorz Karch, and R{\'e}gis Monneau.
\newblock Nonlinear diffusion of dislocation density and self-similar
  solutions.
\newblock {\em Communications in Mathematical Physics}, 294(1):145--168, 2010.

\bibitem{caffarelli2013regularity}
Luis~A Caffarelli, Fernando Soria, and Juan~L V{\'a}zquez.
\newblock Regularity of solutions of the fractional porous medium flow.
\newblock {\em Journal of the European Mathematical Society}, 15(5):1701--1746,
  2013.

\bibitem{caffarelli2011nonlinear}
Luis~A Caffarelli and Juan~L V{\'a}zquez.
\newblock Nonlinear porous medium flow with fractional potential pressure.
\newblock {\em Archive for Rational Mechanics and Analysis}, 202(537--565),
  2011.

\bibitem{caffarelli2016regularity}
Luis~A Caffarelli and Juan~L V{\'a}zquez.
\newblock Regularity of solutions of the fractional porous medium flow with
  exponent 1/2.
\newblock {\em St. Petersburg Mathematical Journal}, 27(3):437--460, 2016.

\bibitem{carrillo2016critical}
Jos{\'e}~A Carrillo, Young-Pil Choi, Eitan Tadmor, and Changhui Tan.
\newblock Critical thresholds in {1D} {E}uler equations with nonlocal forces.
\newblock {\em Mathematical Models and Methods in Applied Sciences},
  26(1):185--206, 2016.

\bibitem{carrillo2012mass}
Jos{\'e}~A Carrillo, Lucas~CF Ferreira, and Juliana~C Precioso.
\newblock A mass-transportation approach to a one dimensional fluid mechanics
  model with nonlocal velocity.
\newblock {\em Advances in Mathematics}, 231(1):306--327, 2012.

\bibitem{castro2008global}
A~Castro and D~C{\'o}rdoba.
\newblock Global existence, singularities and ill-posedness for a nonlocal
  flux.
\newblock {\em Advances in Mathematics}, 219(6):1916--1936, 2008.

\bibitem{chae2005finite}
Dongho Chae, Antonio C{\'o}rdoba, Diego C{\'o}rdoba, and Marco~A Fontelos.
\newblock Finite time singularities in a {1D} model of the quasi-geostrophic
  equation.
\newblock {\em Advances in Mathematics}, 194(1):203--223, 2005.

\bibitem{cordoba2005formation}
Antonio C{\'o}rdoba, Diego C{\'o}rdoba, and Marco~A Fontelos.
\newblock Formation of singularities for a transport equation with nonlocal
  velocity.
\newblock {\em Annals of mathematics}, pages 1377--1389, 2005.

\bibitem{cucker2007emergent}
Felipe Cucker and Steve Smale.
\newblock Emergent behavior in flocks.
\newblock {\em Automatic Control, IEEE Transactions on}, 52(5):852--862, 2007.

\bibitem{do2018global}
Tam Do, Alexander Kiselev, Lenya Ryzhik, and Changhui Tan.
\newblock Global regularity for the fractional {E}uler alignment system.
\newblock {\em Archive for Rational Mechanics and Analysis}, 228(1):1--37,
  2018.

\bibitem{ha2008particle}
Seung-Yeal Ha and Eitan Tadmor.
\newblock From particle to kinetic and hydrodynamic descriptions of flocking.
\newblock {\em Kinetic and Related Models}, 1(3):415--435, 2008.

\bibitem{kiselev2008blow}
Alexander Kiselev, Fedor Nazarov, and Roman Shterenberg.
\newblock Blow up and regularity for fractal burgers equation.
\newblock {\em Dynamics of PDE}, 5(3):211--240, 2008.

\bibitem{kiselev2018global}
Alexander Kiselev and Changhui Tan.
\newblock Global regularity for {1D} {E}ulerian dynamics with singular
  interaction forces.
\newblock {\em SIAM Journal on Mathematical Analysis}, 50(6):6208--6229, 2018.

\bibitem{shvydkoy2017eulerian}
Roman Shvydkoy and Eitan Tadmor.
\newblock {E}ulerian dynamics with a commutator forcing.
\newblock {\em Transactions of Mathematics and Its Applications}, 1(1), 2017.

\bibitem{shvydkoy2017eulerian3}
Roman Shvydkoy and Eitan Tadmor.
\newblock {E}ulerian dynamics with a commutator forcing iii. fractional
  diffusion of order $0<\alpha<1$.
\newblock {\em Physica D: Nonlinear Phenomena}, 2017.

\bibitem{silvestre2016transport}
Luis Silvestre and Vlad Vicol.
\newblock On a transport equation with nonlocal drift.
\newblock {\em Transactions of the American Mathematical Society},
  368(9):6159--6188, 2016.

\bibitem{tadmor2014critical}
Eitan Tadmor and Changhui Tan.
\newblock Critical thresholds in flocking hydrodynamics with non-local
  alignment.
\newblock {\em Philosophical Transactions of the Royal Society of London A:
  Mathematical, Physical and Engineering Sciences}, 372(2028):20130401, 2014.

\bibitem{tan2019euler}
Changhui Tan.
\newblock On the euler-alignment system with weakly singular communication
  weights.
\newblock {\em arXiv preprint arXiv:1901.02582}, 2019.

\end{thebibliography}

\end{document}